\documentclass[a4paper]{amsart}
\usepackage{amsfonts, amsmath, amssymb, amscd, amsthm, array, graphicx, geometry, mathtools, xspace, stmaryrd, enumerate}

\usepackage[utf8]{inputenc}
\usepackage[T1]{fontenc}

\usepackage[textsize=footnotesize,textwidth=20ex]{todonotes}

\usepackage[english]{babel}
\usepackage{hyperref}
\usepackage{dirtytalk}
\usepackage{mathrsfs}
\usepackage{geometry}
\newtheorem{thm}{Theorem}[section]
\newtheorem*{thm*}{Theorem}
\newtheorem{cor}[thm]{Corollary}
\newtheorem{lem}[thm]{Lemma}
\newtheorem{prop}[thm]{Proposition}

\theoremstyle{definition}
\newtheorem{dfn}[thm]{Definition}

\newtheorem{obs}[thm]{Observation}

\newtheorem{con}[thm]{Construction}

\newcommand\ZZ{\mathbb{Z}}

\newcommand{\QQ}{{\mathbb Q}}

\newcommand\NN{\mathbb{N}}

\newcommand{\GL}{\operatorname{GL}}

\newcommand{\Aut}{\operatorname{Aut}}

\newcommand{\End}{\operatorname{End}}
\newcommand{\Inn}{\operatorname{Inn}}

\newcommand{\Fix}{\operatorname{Fix}}

\newcommand{\gen}[1]{\left\langle#1\right\rangle}

\usepackage[most]{tcolorbox} 
    \definecolor{block-gray}{gray}{0.97}
    \newtcolorbox{qq}[1][]{%
        colback=block-gray,
        grow to right by=-10mm,
        grow to left by=-10mm, 
        boxrule=0pt,
        boxsep=2pt,
        enhanced jigsaw,
        borderline west={4pt}{0pt}{gray},
        #1,
    }
    \newtcolorbox{qqo}[1][]{%
        colback=block-gray,
        boxrule=0pt,
        boxsep=2pt,
        enhanced jigsaw,
        borderline west={4pt}{0pt}{gray},
        #1,
    }

 \newcommand{\pres}[2]{\left\langle#1 \mid #2\right\rangle}
\newcommand{\ncl}[1]{\ll \!#1 \!\gg}

\newcounter{mallikacomments}

\begin{document}

\title{Endo-Twisted Conjugacy and Outer Fixed Points in Solvable Baumslag--Solitar Groups}


\author{Mallika Roy}
\address{Harish-Chandra Research Institute, HBNI, Chhatnag Road, Jhunsi, Prayagraj (Allahabad) 211019, India.} \email{mallikaroy@hri.res.in; mallikaroy75@gmail.com}

\subjclass{Primary 20F10.}

\keywords{Solvable Baumslag-Solitar groups, Endo-twisted conjugacy problem, Outer fixed points.}

\begin{abstract}
In this article, we solve the twisted conjugacy problem with respect to endomorphisms for solvable Baumslag--Solitar groups $BS(1,n)$, i.e., we propose an algorithm which, given two elements $u,v \in BS(1,n)$ and an endomorphism $\psi \in \End(BS(1,n))$, decides whether $v=(x\psi)^{-1} u x$ for some $x\in BS(1,n)$. Also, we connect the outer fixed points of a given endomorphism $\psi$ with $\varphi$-twisted conjugacy problem for two words $u, v \in BS(1,n)$, where $\varphi \in \End(BS(1,n))$ and $u, v$ depend on $\psi$. Furthermore, we define the weakly (outer) fixed points and discuss its interplay with the endo-twisted conjugacy problem in $BS(1, n)$.
\end{abstract}

\maketitle

\section{Introduction}


Let $G$ be a group, and $\varphi\in \Aut(G)$ be an automorphism of $G$. Two elements $u, v \in G$ are said to be $\varphi$-twisted conjugated, if there exists $x \in G$ such that $u = (x\varphi)^{-1} u x $, and we denote it by $u \sim_\varphi v$. This twisted conjugacy was first introduced by Reidemeister~\cite{R} from topological view point, and plays a pivotal role in modern Nielsen fixed point theory. One can observe that $\sim_\varphi$ is an equivalence relation on $G$, and it coincides with the usual conjugation, when we replace $\varphi$ by the identity homomorphism.

In a similar manner, one can define endo-twisted conjugacy --- for a given endomorphism $\psi \in \End(G)$ and two elements $u, v \in G$, we say that $u, v$ are $\psi$-twisted conjugated, denoted by $u \sim_\psi v$, if there exists $x \in G$ such that $u = (x\psi)^{-1} u x $. The algorithmic approach of endo-twisted conjugacy is the following:

\noindent \textbf{\boldmath Endo-Twisted Conjugacy Problem, $E-TCP(G)$:} For a finite presentation $G=\pres{X}{R}$, \emph{given an endomorphism $\psi\colon G\to G$ (given by the images of the generators), and two words on the generators, $u,v\in F(X)$, decide whether $u$ and $v$ represent $\psi$-twisted conjugate elements in $G$, i.e., $u\sim_{\psi} v$.}

The central topic of this article is the endo-twisted conjugacy problem for solvable Baumslag--Solitar groups, $BS(1,n)$.

Instead of an endomorphism, if we execute the algorithm with an automorphism as a part of input, then it is known as auto-twisted conjugacy problem or simply by twisted conjugacy problem of $G$, in short TCP(G). The study of Algorithmic Group Theory was initiated with three Dehn Problems (see~\cite{Dehn}), back in 1911, namely the word problem, the conjugacy problem and the isomorphism problem. $E-TCP(G)$ or $TCP(G)$ generalizes the so-called conjugacy problem, $CP(G)$ as discussed above. A positive solution to the twisted conjugacy problem instinctively provides a solution to the conjugacy problem, and hence a solution to the word problem. On the contrary, there exist finitely presented groups with solvable word problem but unsolvable conjugacy problem (see~\cite{M}). Similarly, there exists a finitely presented group with solvable conjugacy problem, but unsolvable twisted conjugacy problem (see~\cite[Corollary 4.9]{BMV}).

It is well-known that the celebrated three Dehn problems are unsolvable in their full generality, but there are substantial intriguing results do exist in the literature solving these three problems in certain families of groups. Let, $1 \rightarrow F \rightarrow G \rightarrow H \rightarrow 1$ be a given short exact sequence of groups with certain conditions, specifically $F$ has solvable auto-twisted conjugacy problem. Bogoploski--Martino--Ventura~\cite{BMV} proved that $G$ has solvable conjugacy problem if and only if the corresponding action subgroup $A \leqslant \Aut(F)$ is orbit decidable. As applications, it was shown that the auto-twisted conjugacy problem is solvable for virtually surface groups and for virtually polycyclic groups in~\cite{BMV}.
There has been an extensive study based on the auto-twisted conjugacy problem. The auto-twisted conjugacy problem for free groups is decidable~\cite{BMMV} and also studied in the other families of groups~\cite{BV}, \cite{BuMV}, \cite{Cr}, \cite{Cr2}, \cite{FTV}, \cite{GM-V}, \cite{Macedo-Yuri}, \cite{Roy-Ventura-Mitra}. 
But there is only sparse literature on the endo-twisted conjugacy problem. The
endo-twisted conjugacy problem for free groups is known to be decidable for certain non-surjective maps~\cite{Kim16}, in general it is an open problem.

In this article, we study the endo-twisted conjugated problem for $BS(1,n)$ and later connect $E-TCP$ with the outer fixed points of the endomorphisms of $BS(1,n)$. This connection provides another approach to decide whether two given elements $u,v$ of $BS(1,n)$ are endo-twisted conjugated, i.e., $u \sim_\varphi v$, where $u, v$ and $\varphi \in \End(BS(1,n))$ are determined by a suitably constructed endomorphism $\psi \in \End(BS(1,n))$. The later part of this work was motivated by~\cite{Laura-Logan}, where Ciobanu--Logan computed the outer fixed points (not being a proper power) in order to find the primitive element which generates the non-trivial fixed subgroup of monomorphisms of free groups of rank $2$.


The article is organised as follows. We briefly review some preliminaries regarding Baumslag--Solitar groups, specifically solvable Baumslag--Solitar groups, $BS(1,n) = \langle a, t \mid  a  = t^{-1} a^n t\rangle $ in Section~\ref{sec: BS overview}. This brief survey mainly includes a normal form of its elements, and its semi-direct product structure based on the notations developed in~\cite{Roy-Ventura-Mitra}. Afterwards in Subsection~\ref{sec: endos}, we discuss the construction of two types of endomorphisms, following the results from Collins--Levin~\cite{CoLe} and O'Neil~\cite{N}.
In Section~\ref{sec: TCP}, we prove that the endo-twisted conjugacy problem $E-TCP$ (see Theorem~\ref{thm: main TCP}) is decidable in $BS(1,n)$. In this aspect, we would like to mention that in~\cite{Roy-Ventura-Mitra} Mitra--Roy--Ventura solved the auto-twisted conjugacy problem in $BS(1,n)$.
Finally, in Section~\ref{sec: outer fixed points} we construct an endomorphism $\psi$ in a way so that the intersection $[a] \cap \Fix \psi$ is non-empty if and only if certain $u$ and $v$ are $\varphi$-twisted conjugated, where $u$ and $v$, $\varphi \in \End(BS(1,n))$ are determined by that $\psi$ (see Proposition~\ref{thm: TCP FP}). Later, we define the weakly outer fixed points and prove (see Proposition~\ref{prop: TCP WFP}) that the existence of weakly outer fixed points of an endomorphism $\psi$ assures the twisted conjugacy of two elements defined by $\psi$.

\subsection*{General notation and conventions}

For a group $G$, $\End(G)$ (resp., $\Aut(G)$) denotes the group of endomorphisms (resp., automorphisms) of $G$. We write them all with the argument on the left, that is, we denote by $(x)\varphi$ (or simply $x\varphi$) the image of the element $x$ by the homomorphism $\varphi$; accordingly, we denote by $\varphi \psi$ the composition $A \xrightarrow{\varphi} B \xrightarrow{\psi} C$. Specifically, we will reserve the letter $\gamma$ for right conjugations, $\gamma_g \colon G \to G, x\mapsto g^{-1}xg$. Following the same convention, when thinking of a matrix $A$ as a map, it will always act on the right of horizontal vectors, $\textbf{v}\mapsto \textbf{v}A$. We denote by $\GL_m(\ZZ)$ the linear group over the integers. We use the function $| \cdot |_a$ to count the number of $a$-occurrences in a word $w$.

\section{Preliminaries on Solvable Baumslag--Solitar groups}\label{sec: BS overview}

The so-called Baumslag--Solitar groups $BS(m,n)$ are the class of two-generated one-related groups presented by 
\[
BS(m,n) = \pres{a,t}{a^m  = t^{-1} a^n t},
\]
for $m,n\in \ZZ$. They all are HNN-extensions of $\ZZ$ (with associated subgroups $m\ZZ$ and $n\ZZ$) sitting in the middle of the well known short exact sequence
 \begin{equation}\label{eq: ses}
\begin{array}{ccccccccc} 1 & \to\, & \ncl{a} & \, \to & BS(m,n) & \stackrel{\pi}{\to} & \ZZ =\gen{t} & \to & 1. \\ & & & & a & \mapsto & 1 & & \\ & & & & t & \mapsto & t & &
\end{array}
 \end{equation}
The homomorphism $\pi\colon BS(m,n)\twoheadrightarrow \ZZ$, $w\mapsto t^{|w|_t}$, is well defined because the defining relation is $t$-balanced (not being the case for $a$, unless $m=n$). We note that $\ker \pi =\ncl{a}\, =\{w(a,t) \mid |w|_t =0\}\unlhd BS(m,n)$. The elementary examples include $BS(0,1)\simeq \ZZ$, $BS(1,1)\simeq \ZZ^2$, and $BS(1,-1)=\pres{a,t}{a=t^{-1}a^{-1}t}$, which is the fundamental group of the Klein bottle.

From now on, we concentrate in the solvable groups within this family, and that is the case when $m=1$. Also, for the rest of the paper, we assume $n\neq \pm 1$. 

\subsection{Normal Form}

In this Section we recall the normal form of an element in $BS(1, n)$ developed in~\cite[Section 2.1]{Roy-Ventura-Mitra} and we fix some notations which will be used throughout the paper. The defining relation of $BS(1,n)$ is $a = t^{-1} a^n t$, which can also be thought as of the forms $ta = a^n t$ and $ta^{-1} = a^{-n}t$ --- allows us to accumulate each positive occurrence of $t$ to right. Repetitively using these relations, one can easily obtain:
\begin{equation}\label{eq: jump right}
t^p a^k =a^{kn^p}t^p,
 \end{equation}
for every $p,k\in \ZZ$, $p\geq 0$. Similarly, rewriting the defining relation as $at^{-1} = t^{-1}a^n $ and $a^{-1}t^{-1} = t^{-1}a^{-n}$ tells us that each negative occurrence of $t$'s can always be moved to the left, and so
\begin{equation}\label{eq: jump left}
a^k t^{-p}=t^{-p}a^{kn^p},
 \end{equation}
for every $p,k\in \ZZ$, $p\geq 0$.

Hence, any element $w(a,t)\in BS(1,n)$ can be expressed in the form $g=t^{-p}a^k t^q$, where $p,k,q\in \ZZ$, $p,q\geq 0$. Since, $BS(1,n)$ is torsion-free, we note that in the group $BS(1,n)$, $t^{-p_1}a^{k_1} t^{p_1+c_1}=t^{-p_2}a^{k_2} t^{p_2+c_2}$ if and only if $c_1 =c_2 \in \ZZ$ and $k_1/n^{p_1}=k_2/n^{p_2}\in \ZZ[1/n]$, where $p_i, k_i, c_i\in \ZZ$ with $p_i\geq 0$, for $i=1,2$. It is clear that $c_1=c_2$ is a necessary condition by applying $\pi$ from~\eqref{eq: ses}.
Thus, any element of $BS(1,n)$ can be written in the form $g= t^{-p}a^k t^q =t^{-p}a^k t^p t^c$, where $p,k, c\in \ZZ$, $p\geq 0$ --- close to being a normal form. 

From now on, we set the following notation: for $\alpha =k/n^p\in \ZZ[1/n]$ (here, $k,p\in \ZZ$, $p\geq 0$) write
 \[a^{\alpha}:=t^{-p}a^k t^p \in BS(1,n).\]
The key ideas for setting up this notation are the following: (1) the equality among rational numbers $kn/n^{p+1} =k/n^p$ corresponds to the equality $t^{-p-1}a^{kn} t^{p+1}=t^{-p} (t^{-1}a^n t)^k t^p=t^{-p}a^k t^p$ among the corresponding elements in $BS(1,n)$; (2) this notation unifies the above two equations~\eqref{eq: jump right} and~\eqref{eq: jump left} --- $t^c$, for $c\in \ZZ$ can be shifted to the right of rational powers of $a$ at the price of multiplying its exponent by $n^c$.

We record Lemma~\ref{lem: linearity} and Lemma~\ref{lem: calculs} from~\cite{Roy-Ventura-Mitra} without giving the proofs. Lemma~\ref{lem: linearity} proves linearity for this family of groups: 

\begin{lem}\label{lem: linearity}
The map
 \begin{equation}\label{eq: BS into GL2}
\begin{array}{rcl} \varphi\colon BS(1,n) & \to & \GL_2(\ZZ[1/n])\leqslant \GL_2(\QQ) \\[3pt] a & \mapsto & A=\left(\begin{smallmatrix} 1 & 1 \\ 0 & 1 \end{smallmatrix}\right) \\[3pt] t & \mapsto & T=\left(\begin{smallmatrix} n & 0 \\ 0 & 1 \end{smallmatrix}\right) \end{array}
 \end{equation}
defines a monomorphism of groups. In particular, $BS(1,n)$ is a linear group. 
\end{lem}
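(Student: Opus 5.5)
The plan has two parts: first check that the assignment extends to a homomorphism, then show it is injective by comparing the image with the normal form from the previous subsection.

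\emph{Well-definedness.} By von Dyck's theorem, it suffices to check that $A$ and $T$ are invertible over $\ZZ[1/n]$ and satisfy the defining relation $A = T^{-1}A^nT$.
\begin{itemize}
\item $\det A = 1$ and $\det T = n$. Since $n$ is a unit in $\ZZ[1/n]$, both matrices lie in $\GL_2(\ZZ[1/n])$.
\item We have $A^k=\left(\begin{smallmatrix} 1 & k \\ 0 & 1\end{smallmatrix}\right)$ and $T^{-1}=\left(\begin{smallmatrix} 1/n & 0 \\ 0 & 1\end{smallmatrix}\right)$.
\item A direct computation gives $T^{-1}A^nT=\left(\begin{smallmatrix} 1 & 1 \\ 0 & 1\end{smallmatrix}\right)=A$. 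With the convention of right actions, matrix products are taken in the order the letters appear, so this matches the relation exactly.
\end{itemize}

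\emph{Injectivity.} Use the normal form: every $g\in BS(1,n)$ can be written $g=t^{-p}a^kt^pt^c$ with $p\geq 0$ and $k,c\in\ZZ$. Its image is
\[
T^{-p}A^kT^{p}T^c=\left(\begin{smallmatrix} 1 & k/n^p \\ 0 & 1\end{smallmatrix}\right)\left(\begin{smallmatrix} n^c & 0 \\ 0 & 1\end{smallmatrix}\right)=\left(\begin{smallmatrix} n^c & k/n^p \\ 0 & 1\end{smallmatrix}\right).
\]
So $g\mapsto I$ forces two conditions:
\begin{itemize}
\item $n^c=1$, hence $c=0$, because $n\neq\pm1$ (the case $n=0$ is excluded since $T$ would not be invertible);
\item $k/n^p=0$, hence $k=0$.
\end{itemize}
Then $g=t^{-p}a^0t^p=1$, so the kernel is trivial.

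This argument also recovers the criterion stated earlier for equality of normal forms: two elements coincide exactly when their $c$'s and their rational exponents $k/n^p\in\ZZ[1/n]$ agree. Linearity follows because $\GL_2(\ZZ[1/n])\leqslant\GL_2(\QQ)$.

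The only delicate step is relying on the normal form, namely that every element can be written as $t^{-p}a^kt^q$. This follows from repeated use of \eqref{eq: jump right} and \eqref{eq: jump left} to move positive powers of $t$ to the right and negative powers to the left. Once that is in place, the rest is routine matrix computation.
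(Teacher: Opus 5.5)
Your proof is correct and complete. The paper itself gives no proof of this lemma; it quotes it from Mitra--Roy--Ventura. Your argument is the standard one: von Dyck's theorem plus the check $T^{-1}A^nT=A$, then reading off $c$ and $k/n^p$ from the image $\left(\begin{smallmatrix} n^c & k/n^p \\ 0 & 1\end{smallmatrix}\right)$ of $t^{-p}a^kt^pt^c$. It uses the hypothesis $n\neq 0,\pm1$ exactly where it is needed, namely to get $n^c=1\Rightarrow c=0$.

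The step you flag as delicate can be made rigorous as follows. The set $\{t^{-p}a^kt^q : p,q\geq 0,\ k\in\ZZ\}$ contains $1$ and is closed under right multiplication by $a^{\pm1}$ and $t^{\pm1}$, using $t^qa^{\pm1}=a^{\pm n^q}t^q$ and, when $q=0$, $a^kt^{-1}=t^{-1}a^{kn}$. Hence this set is all of $BS(1,n)$. This uses only the defining relation, so there is no circularity. In fact, your computation is what justifies the uniqueness half of the normal-form criterion that the paper states before the lemma.
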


Lemma~\ref{lem: calculs} states that the new exponential notation is compatible with the standard rules of computation: 

\begin{lem}\label{lem: calculs}
For any $\alpha =k/n^p,\, \beta=\ell/n^q\in \ZZ[1/n]$ ($k,\ell, p,q\in \ZZ$, $p,q\geq 0$) and $c, r\in \ZZ$, we have 
\begin{itemize}
\item[(i)] $t^c a^{\alpha} =a^{\alpha n^c} t^c$ (equivalently, $a^{\alpha} t^c= t^c a^{\alpha n^{-c}}$);
\item[(ii)] $a^{\alpha}a^{\beta}=a^{\alpha+\beta}$;
\item[(iii)] $(a^{\alpha})^r =a^{r\alpha}$;
\item[(iv)] $t^{-c}a^{\alpha}t^c=a^{\alpha/n^c}$;
\item[(v)] $\big( a^{\alpha}t^c \big)^{-1}=a^{-\alpha n^{-c}} t^{-c}$; 
\item[(vi)] $\big( a^{\alpha}t^c \big)^r = a^{\frac{n^{rc}-1}{n^c-1}\alpha} t^{rc}$.
\end{itemize}
\end{lem}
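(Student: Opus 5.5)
The approach is to reduce every identity to the defining relation through the two jump rules \eqref{eq: jump right} and \eqref{eq: jump left}, after first checking that $a^{\alpha}$ is well defined. Write $\alpha=k/n^p$. Two representatives $k/n^p=kn^s/n^{p+s}$ give the same element, since $t^{-(p+s)}a^{kn^s}t^{p+s}=t^{-p}(t^{-s}a^{n^s}t^{s})^{k}t^{p}$ and $t^{-s}a^{n^s}t^s=a$ by iterating $t^{-1}a^nt=a$. With this in hand, any two elements $\alpha,\beta$ can be written over a common denominator $n^m$ with $m\ge 0$.

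\textbf{(ii) and (iii).} For (ii), write $\alpha=k/n^m$ and $\beta=\ell/n^m$. Then
\[
a^{\alpha}a^{\beta}=t^{-m}a^{k}t^{m}t^{-m}a^{\ell}t^{m}=t^{-m}a^{k+\ell}t^{m}=a^{\alpha+\beta}.
\]
Part (iii) follows from (ii) by induction on $r\ge 0$. For negative $r$ we use $(a^{\alpha})^{-1}=t^{-m}a^{-k}t^{m}=a^{-\alpha}$.

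\textbf{(iv).} For $c\ge 0$, $t^{-c}a^{\alpha}t^{c}=t^{-(m+c)}a^kt^{m+c}=a^{k/n^{m+c}}=a^{\alpha/n^c}$. For $c<0$, first raise the denominator so that $m\ge -c$. Then
\[
t^{-c}a^{\alpha}t^{c}=t^{-(m+c)}a^{k}t^{m+c}=a^{k/n^{m+c}}=a^{\alpha n^{-c}},
\]
which again equals $a^{\alpha/n^{c}}$.

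\textbf{(i).} Multiply (iv) on the left by $t^{c}$. With $c$ replaced by $-c$, (iv) reads $t^{c}a^{\alpha}t^{-c}=a^{\alpha n^{c}}$, which rearranges to $t^ca^{\alpha}=a^{\alpha n^c}t^c$. The equivalent form $a^{\alpha}t^c=t^ca^{\alpha n^{-c}}$ is the same identity applied to $\alpha n^{-c}$.

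\textbf{(v).} We compute $(a^{\alpha}t^c)^{-1}=t^{-c}a^{-\alpha}$, and by (i) with $-c$ this equals $a^{-\alpha n^{-c}}t^{-c}$.

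\textbf{(vi).} This is the only part needing care; it goes by induction on $r\ge0$. The case $r=0$ is trivial. For the inductive step,
\[
(a^{\alpha}t^c)^{r+1}=a^{\frac{n^{rc}-1}{n^c-1}\alpha}t^{rc}a^{\alpha}t^{c}.
\]
Using (i) and (ii), this becomes $a^{(\frac{n^{rc}-1}{n^c-1}+n^{rc})\alpha}t^{(r+1)c}$, and $\frac{n^{rc}-1}{n^c-1}+n^{rc}=\frac{n^{(r+1)c}-1}{n^c-1}$. The case $c=0$ is read as the limit value $r\alpha$, which agrees with (iii).

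Two points need attention. The first is that the coefficient lies in $\ZZ[1/n]$: $\frac{n^{rc}-1}{n^c-1}$ is a geometric sum in $n^c$, which is fine when $c<0$ as well. The second is negative $r$. There I would apply (v) and the positive case to $(a^{\alpha}t^c)^{-1}$, then check the identity $\frac{n^{-rc}-1}{n^{-c}-1}\cdot(-\alpha n^{-c})=\frac{n^{-rc}-1}{n^{c}-1}\alpha$. This sign and exponent bookkeeping, together with the well-definedness check, is the main place where errors could creep in.

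Alternatively, everything can be verified at once through the faithful representation of Lemma~\ref{lem: linearity}. We have $a^{\alpha}\mapsto\left(\begin{smallmatrix}1&\alpha\\0&1\end{smallmatrix}\right)$, since $T^{-p}A^kT^p$ has upper-right entry $k/n^p$. Also $a^{\alpha}t^c\mapsto\left(\begin{smallmatrix}n^c&\alpha\\0&1\end{smallmatrix}\right)$. Each identity then reduces to a $2\times2$ matrix product, and injectivity transfers it back to $BS(1,n)$. I would present this matrix check as the cleanest route, using the direct computations above as a cross-check.
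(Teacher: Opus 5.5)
Your proof is correct. The paper gives no proof of this lemma. It is quoted from \cite{Roy-Ventura-Mitra}, and the only justification in the text is the remark that $kn/n^{p+1}=k/n^p$ corresponds to $t^{-p-1}a^{kn}t^{p+1}=t^{-p}a^kt^p$. That remark is exactly your well-definedness check. You could add the one-line reduction showing that your special case covers every pair of representatives: if $k_1/n^{p_1}=k_2/n^{p_2}$ with $p_1\le p_2$, then $k_2=k_1n^{p_2-p_1}$.

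Of your two routes, the direct one is the more self-contained. It derives every identity from the defining relation alone, and it never needs to know that distinct normal forms give distinct elements. The matrix route is shorter and makes well-definedness automatic. However, it rests on the injectivity in Lemma~\ref{lem: linearity}, which is strictly more than the lemma requires, since it amounts to uniqueness of the normal form. The paper also only cites that lemma. So if you present the matrix check as the main proof, say explicitly that it depends on that cited lemma, and treat the direct computations as the elementary proof rather than a cross-check.

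Two small checks on the details:
\begin{itemize}
\item Your sign bookkeeping for negative $r$ in (vi) is right, since $\frac{-\alpha n^{-c}}{n^{-c}-1}=\frac{\alpha}{n^c-1}$.
\item In the degenerate case $c=0$ the stated coefficient is $0/0$. Reading it as the geometric sum $\sum_{i=0}^{r-1}n^{ic}=r$ is the correct interpretation. Writing the coefficient as that sum for $r\ge 0$ also makes its membership in $\ZZ[1/n]$ immediate.
\end{itemize}

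A minor wording point in (i): multiplying (iv) on the left by $t^c$ gives the second form $a^{\alpha}t^c=t^ca^{\alpha n^{-c}}$ directly. The first form is what you obtain by replacing $c$ with $-c$ and multiplying on the right.
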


The reader is referred to~\cite{Roy-Ventura-Mitra} for explicit proofs of both the aforementioned Lemmas.

Using this notation and the above arguments, every element $g\in BS(1,n)$ can be written, in a unique way, as $g=a^{\alpha}t^c$, for some $\alpha\in \ZZ[1/n]$ and some $c\in \ZZ$. Moreover, $\big( a^{\alpha_1}t^{c_1} \big)\big( a^{\alpha_2}t^{c_2} \big) =a^{\alpha_1 +n^{c_1}\alpha_2} t^{c_1+c_2}$.

Finally, we conclude this section by discussing the semi-direct product structure of $BS(1,n)$. Clearly,
\[
\ker \pi =\ncl{a} =\{a^{\alpha}t^c\in BS(1,n) \mid c=0\}=\{a^{\alpha} \mid \alpha\in \ZZ[1/n]\}\simeq \ZZ[1/n].
\]
And so, the short exact sequence~\eqref{eq: ses} for $m=1$ has the form

\begin{equation*}
\begin{array}{ccccccccc} 1 & \to\, & \ZZ[1/n] & \, \to & BS(1,n) & \to & \ZZ & \to & 1,
\end{array}
 \end{equation*}
and it splits because $\ZZ$ is free. In particular, $BS(1,n) \simeq \ZZ[1/n] \rtimes \ZZ$, where $\ZZ = \gen{t}$ acts on $\ZZ[1/n]$ by multiplying by $n$. We also observe that $BS(1,n)^{\rm ab}=\ZZ \oplus \ZZ/|n-1|\ZZ$, with $\ncl{a}$ being, for $n\neq 1$, the only normal subgroup whose quotient is isomorphic to $\ZZ$.

\subsection{\boldmath Endomorphisms of $BS(1,n)$}\label{sec: endos}

Following a previous work by Collins--Levin~\cite{CoLe}, J. O'Neill~\cite{N} explicitly described two types of endomorphisms, namely \textit{type-I} and \textit{type-II}, of $BS(1,n)$, where the first type, \textit{type-I} are monomorphisms and the second type, \textit{type-II} contains the subclass of proper retracts. We state here the result without giving its proof. 

\begin{prop}[J. O'Neill, {\cite[Proposition 2.1]{N}}]\label{prop: O'Neil}
For $n\neq \pm 1$,
 $$
\End(BS(1,n))=\{\psi^{I}_{\alpha, \beta, 1},\, \psi^{II}_{0, \beta, c} \mid \alpha, \beta\in \ZZ[1/n],\, c \in \ZZ\}
 $$
where, for $\alpha=\ell/n^k$, $\beta=q/n^p$ and $c = r -p$ with $k,\ell, p,q, r\in \ZZ$, $k, p, r\geq 0$, $\psi^{I}_{\alpha, \beta, 1}$ and $\psi^{II}_{0, \beta, c}$ are defined as follows:
\begin{equation}\label{eq: endos I}
\begin{array}{rcl} \psi^{I}_{\alpha, \beta, 1}\colon BS(1,n) & \to & BS(1,n) \\ a & \mapsto & a^{\alpha}=t^{-k} a^{\ell} t^k \\ t & \mapsto & a^{\beta}t=t^{-p}a^{q} t^{p+1};
 \end{array}
 \end{equation}
\begin{equation}\label{eq: endos II}
\begin{array}{rcl} \psi^{II}_{0, \beta, c}\colon BS(1,n) & \to & BS(1,n) \\ a & \mapsto & a^{0}=1 \\ t & \mapsto & a^{\beta}t^c=t^{-p}a^{q} t^{r} = t^{-p}a^{q}t^pt^{r-p},
 \end{array}
 \end{equation}
for  $\psi^{II}_{0, \beta, c}$, if $q$ is a multiple of $n$, then $p=0$ or $r=0$.
\end{prop}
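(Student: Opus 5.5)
To prove Proposition~\ref{prop: O'Neil}, I would classify all endomorphisms $\psi$ by computing the images of the two generators, using the normal form $g=a^{\gamma}t^{d}$ and the multiplication rule $(a^{\alpha_1}t^{c_1})(a^{\alpha_2}t^{c_2})=a^{\alpha_1+n^{c_1}\alpha_2}t^{c_1+c_2}$ from Lemma~\ref{lem: calculs}. The starting point is that $\psi$ is determined by $a\psi=a^{\gamma}t^{d}$ and $t\psi=a^{\beta}t^{c}$. Conversely, such an assignment extends to an endomorphism exactly when the defining relation $t\psi\, a\psi=(a\psi)^n\, t\psi$ holds.

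The first step is to show that $d=0$, i.e.\ $a\psi\in\ncl{a}$. Composing with $\pi$ gives $\pi(a\psi)=t^d$. The relation $a=t^{-1}a^nt$ forces $\pi(a)=\pi(a)^n$ in the abelian group $\ZZ$, so $(n-1)d=0$. Since $n\neq 1$, this gives $d=0$, so $a\psi=a^{\gamma}$ with $\gamma\in\ZZ[1/n]$.

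The second step imposes the relation. Using parts (i)--(iii) of Lemma~\ref{lem: calculs}, $t\psi\,a\psi=a^{\beta}t^ca^{\gamma}=a^{\beta+n^c\gamma}t^c$, while $(a\psi)^n t\psi=a^{n\gamma+\beta}t^c$. The relation is therefore equivalent to $n^c\gamma=n\gamma$ in $\ZZ[1/n]$, which gives two cases:
\begin{itemize}
\item If $\gamma\neq 0$, then $n^c=n$, and since $n\neq\pm1$ this means $c=1$. This yields $\psi^{I}_{\alpha,\beta,1}$ with $\alpha=\gamma$.
\item If $\gamma=0$, then $c$ and $\beta$ are arbitrary. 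This yields $\psi^{II}_{0,\beta,c}$.
\end{itemize}
The case $\alpha=0$, $c=1$ appears in both families, which is harmless. Writing $\alpha=\ell/n^k$ and $\beta=q/n^p$ and applying the definition $a^{\alpha}=t^{-k}a^{\ell}t^{k}$ recovers the explicit word forms in~\eqref{eq: endos I} and~\eqref{eq: endos II}. The converse, that every listed map is an endomorphism, holds by the same computation read backwards.

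The main obstacle is the final normalisation clause for type-II maps, ``if $q$ is a multiple of $n$, then $p=0$ or $r=0$''. I read this as a convention that makes the representation $t^{-p}a^{q}t^{r}$ reduced, not as a genuine restriction on the endomorphisms. I would prove it as follows. If $n\mid q$ and $p,r\geq1$, then $t^{-p}a^{q}t^{r}=t^{-(p-1)}a^{q/n}t^{r-1}$ by~\eqref{eq: jump right}. Iterating this reduction strictly decreases $p$ and $r$, so it must stop, either when $n\nmid q$ or when $p=0$ or $r=0$. The value $c=r-p$ and the element $a^{\beta}t^{c}$ are unchanged throughout, so every type-II endomorphism admits such a reduced expression.

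I would also check that $n=-1$ is excluded only through the step $n^c=n\Rightarrow c=1$. That step is where the hypothesis $n\neq\pm1$ is used.
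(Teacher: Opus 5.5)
Your proof is correct, and it does more than the paper. The paper states this proposition without proof, citing O'Neill (after Collins--Levin). It only checks the easy direction: at the start of the proof of Lemma~\ref{lem: Composition} it verifies that each $\psi^{I}_{\alpha,\beta,1}$ and $\psi^{II}_{0,\beta,c}$ respects the defining relation, using the same computation $t^{-1}a^{n\alpha}t=a^{\alpha}$ that you ``read backwards''. What you supply is the classification half. First you apply $\pi$ to get $d=0$; you wrote $\pi(a)=\pi(a)^n$, but you mean $\pi(a\psi)=\pi(a\psi)^n$. Then the normal-form multiplication rule reduces the relation to $n^c\gamma=n\gamma$, which splits into the two families and shows exactly where $n\neq 1$ and $n\neq -1$ are used. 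Your reading of the clause on $q$ as a normalisation of the word $t^{-p}a^{q}t^{r}$, rather than a restriction on endomorphisms, matches the paper's Observation right after the proposition. One detail to add: before reducing, you may first have to enlarge $p$ (replace $q/n^p$ by $qn^s/n^{p+s}$) so that $r=c+p\geq 0$. Your route makes the statement self-contained in the paper's $a^{\alpha}t^{c}$ notation. The paper's citation is shorter, but it leaves the ``no other endomorphisms'' direction outside the text.
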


\begin{obs}
   Let $\psi^{II}_{0, \beta, c}$ be a type-II endomorphism, where $\beta = q/n^p$ and $c = r -p$ with $p,q, r\in \ZZ$, $p, r\geq 0$. If $q$ is a multiple of $n$, i.e., $q = kn$, for some $k \in \NN$, then $p=0$ or $r=0$. Further, if $p=0$, then $\psi^{II}_{0, \beta, c} = \psi^{II}_{0, \beta', c'}$, where $\beta' = kn$ and $c' =r$ --- $\psi^{II}_{0, \beta', c'} \, \colon a \mapsto 1, \, t \mapsto a^{kn}t^r$. And if $r=0$, then $\psi^{II}_{0, \beta, c} = \psi^{II}_{0, \beta'', c''}$, where $\beta'' = q/n^p$ and $c'' =-p$ --- $\psi^{II}_{0, \beta'', c''} \, \colon a \mapsto 1, \, t \mapsto a^{q/n^p} t^{-p}$. Thus, independent of the fact whether $q$ is a multiple of $n$ or not, any type-II endomorphism of $BS(1,n)$ is of the form $\psi^{II}_{0, \beta, c}$, where $\beta \in \ZZ[1/n]$ and $c \in \ZZ$.
\end{obs}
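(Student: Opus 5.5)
The plan is to reduce everything to the unique normal form $g=a^{\gamma}t^{d}$, with $\gamma\in\ZZ[1/n]$ and $d\in\ZZ$, established in Section~\ref{sec: BS overview}, together with the computation rules of Lemma~\ref{lem: calculs}.

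First I would note why the pair $(\beta,c)$ is the only real datum. A homomorphism sending $a\mapsto 1$ is determined by the image of $t$. Moreover, any choice of image of $t$ is admissible: the defining relation $a=t^{-1}a^nt$ becomes $1=(t\psi)^{-1}(t\psi)$, which holds trivially. So a type-II endomorphism is exactly the assignment $a\mapsto 1$, $t\mapsto g$ for an arbitrary $g\in BS(1,n)$. Writing $g=a^{\beta}t^c$ in normal form, the endomorphism is determined by $(\beta,c)$. The side condition in Proposition~\ref{prop: O'Neil} (if $n\mid q$ then $p=0$ or $r=0$) only restricts which word $t^{-p}a^qt^r$ is chosen to represent $g$; it does not restrict $g$ itself.

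Next I would treat the two cases explicitly. Suppose $q=kn$.
\begin{itemize}
\item If $p=0$, then $t\psi=a^{q}t^{r}=a^{kn}t^r$. This is already in normal form with $\beta'=kn\in\ZZ\subseteq\ZZ[1/n]$ and $c'=r$, so $\psi=\psi^{II}_{0,\beta',c'}$.
\item If $r=0$, then $t\psi=t^{-p}a^{q}$. By Lemma~\ref{lem: calculs}(i), applied with $c=-p$ and $\alpha=q$, we get $t^{-p}a^{q}=a^{q n^{-p}}t^{-p}$. Hence $\beta''=q/n^p$ and $c''=-p$, and $\psi=\psi^{II}_{0,\beta'',c''}$.
\end{itemize}

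In the remaining situation, where $q$ is not a multiple of $n$ and $p,r\ge 0$ are arbitrary, I would rewrite
\[
t^{-p}a^qt^r=(t^{-p}a^qt^p)\,t^{r-p}=a^{q/n^p}t^{r-p},
\]
using the definition of $a^{\alpha}$ (equivalently Lemma~\ref{lem: calculs}(iv)). This shows $\psi=\psi^{II}_{0,\beta,c}$ with $\beta=q/n^p$ and $c=r-p$.

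Combining the three cases, every type-II endomorphism has the form $\psi^{II}_{0,\beta,c}$ with $\beta\in\ZZ[1/n]$ and $c\in\ZZ$. By uniqueness of the normal form, distinct pairs $(\beta,c)$ give distinct endomorphisms. There is no genuine obstacle here. The only point needing care is the bookkeeping that the new parameters $\beta',c'$ (resp.\ $\beta'',c''$) differ from the naive $\beta=q/n^p$, $c=r-p$ only through the representation chosen, which the uniqueness of normal form settles.
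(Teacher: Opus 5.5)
Your proof is correct and follows the paper's (implicit) argument: the observation is just the rewriting $t^{-p}a^{q}t^{r}=a^{q/n^{p}}t^{r-p}$ in the cases $p=0$ and $r=0$, with the side condition read as a normalization of the representing word, exactly as you treat it. One small remark: in both cases the new parameters coincide exactly with $\beta=q/n^{p}$ and $c=r-p$ (e.g.\ $kn=q/n^{0}$ and $r=r-0$), so your closing appeal to uniqueness of normal form to reconcile ``differing'' parameters is unnecessary.
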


The following Proposition captures how any two endomorphisms in $BS(1,n)$ compose in the our notation of `normal form'.

\begin{lem}\label{lem: Composition}
The composition rules of type-I and type-II endomorphisms are the following:
\begin{itemize}
    \item[(i)] $\psi^{I}_{\alpha_1, \beta_1, 1} \circ \psi^{I}_{\alpha_2, \beta_2, 1} =\psi^{I}_{\alpha_1\alpha_2, \beta_1\alpha_2+\beta_2, 1}$;
    
    \item[(ii)] $\psi^{II}_{0,\beta_1, c_1} \circ \psi^{II}_{0,\beta_2, c_2} =\psi^{II}_{0, \mu\beta_2, c_1c_2}$, where $\mu = \frac{n^{c_1c_2}-1}{n^{c_2}-1}$;
    
    \item[(iii)] $\psi^{II}_{0,\beta_1, c_1} \circ \psi^{I}_{\alpha_2, \beta_2, 1} = \psi^{II}_{0, \beta_1\alpha_2 + \mu_1\beta_2, c_1}$, where $\mu_1 = \frac{n^{c_1} - 1}{n-1}$;
    
    \item[(iv)] $\psi^{I}_{\alpha_1, \beta_1, 1} \circ \psi^{II}_{0,\beta_2, c_2} = \psi^{II}_{0,\beta_2, c_2}$.
\end{itemize}
\end{lem}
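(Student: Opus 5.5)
The plan is to verify each identity directly on the generators $a$ and $t$, since an endomorphism of $BS(1,n)$ is determined by the images of $a$ and $t$. We read $\psi_1\circ\psi_2$ in the paper's right-action convention: first apply $\psi_1$, then $\psi_2$, so $x(\psi_1\circ\psi_2)=(x\psi_1)\psi_2$. Identity (iv) confirms this reading: $a\mapsto a^{\alpha_1}\mapsto 1$, and $t\mapsto a^{\beta_1}t\mapsto a^{\beta_2}t^{c_2}$, because every power of $a$ dies under $\psi^{II}_{0,\beta_2,c_2}$.

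\textbf{Step 1: a preliminary observation.} We need the image of a rational power $a^{\gamma}$, with $\gamma=k/n^p$, under a type-I map $\psi^{I}_{\alpha,\beta,1}$. Write $a^\gamma=t^{-p}a^kt^p$. Its image is $(a^\beta t)^{-p}\,a^{k\alpha}\,(a^\beta t)^p$. The kernel $\ncl{a}\simeq\ZZ[1/n]$ is abelian, so conjugating an element $a^{\delta}$ by $a^{\beta'}t^{p}$ only sees the $t^p$ part. By Lemma~\ref{lem: calculs}(iv) (together with (vi) to write $(a^\beta t)^p$ as $a^{\beta'}t^p$), this conjugate is $a^{\delta/n^p}$. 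Hence $(a^\gamma)\psi^{I}_{\alpha,\beta,1}=a^{\gamma\alpha}$. For type-II maps, $a^\gamma\mapsto 1$ trivially.

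\textbf{Step 2: the four compositions.} Each identity is then a one-line computation on $t$, the value on $a$ being immediate from Step 1.
\begin{itemize}
\item[(i)] $a\mapsto a^{\alpha_1}\mapsto a^{\alpha_1\alpha_2}$, and $t\mapsto a^{\beta_1}t\mapsto a^{\beta_1\alpha_2}a^{\beta_2}t=a^{\beta_1\alpha_2+\beta_2}t$, using Lemma~\ref{lem: calculs}(ii).
\item[(iii)] $a\mapsto 1$, and $t\mapsto a^{\beta_1}t^{c_1}\mapsto a^{\beta_1\alpha_2}(a^{\beta_2}t)^{c_1}$. Lemma~\ref{lem: calculs}(vi) gives $(a^{\beta_2}t)^{c_1}=a^{\mu_1\beta_2}t^{c_1}$, so the image is $a^{\beta_1\alpha_2+\mu_1\beta_2}t^{c_1}$.
\item[(ii)] $a\mapsto 1$, and $t\mapsto a^{\beta_1}t^{c_1}\mapsto (a^{\beta_2}t^{c_2})^{c_1}=a^{\mu\beta_2}t^{c_1c_2}$, again by Lemma~\ref{lem: calculs}(vi).
\end{itemize}

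\textbf{Main obstacle: validity of Lemma~\ref{lem: calculs}(vi).} This formula is used with exponent $r=c_1$, which may be negative, and with $c=c_2$, which may be $0$. We would check the negative-exponent case directly: invert using Lemma~\ref{lem: calculs}(v), and observe that the geometric-sum expression $\frac{n^{rc}-1}{n^c-1}$ still holds formally for $r<0$. The case $c_2=0$ needs separate treatment. There $\mu$ must be read as the limiting value $c_1$, since $(a^{\beta_2})^{c_1}=a^{c_1\beta_2}$. We would state this convention explicitly.

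Finally, we would note that each resulting triple is again of the form listed in Proposition~\ref{prop: O'Neil}, so the right-hand sides are genuine endomorphisms of the stated types.
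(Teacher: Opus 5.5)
Your proposal is correct and takes essentially the same route as the paper: both verify each identity on the generators $a,t$ using Lemma~\ref{lem: calculs}. Your Step~1, that $(a^{\gamma})\psi^{I}_{\alpha,\beta,1}=a^{\gamma\alpha}$ because conjugation on the abelian kernel sees only the $t$-exponent, just packages the geometric-sum cancellations the paper carries out by hand. Your remark that $\mu$ in (ii) is the indeterminate $0/0$ when $c_2=0$, and must then be read as $c_1$, is a genuine point the paper's statement and proof pass over.
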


\begin{proof}
First we check that both the maps type-I, $\psi^{I}_{\alpha, \beta, 1}$ and type-II, $\psi^{II}_{0, \beta, c}$ are well defined by proving that they preserve the defining relation:
 \begin{align*}
a \stackrel{\psi^I_{\alpha,\beta, 1}}{\quad \mapsto\quad} & a^{\alpha}, \\ 
t^{-1}a^nt \stackrel{\psi^I_{\alpha,\beta, 1}}{\quad \mapsto\quad} &\big( a^{\beta} t\big)^{-1} \big( a^{\alpha}\big)^n \big( a^{\beta} t\big)= t^{-1} a^{-\beta} a^{n\alpha} a^{\beta}t= t^{-1} a^{n\alpha} t =a^{\alpha}.
 \end{align*}
It is very straightforward for the type-II:
 \begin{align*}
a \stackrel{\psi^{II}_{0,\beta, c}}{\quad \mapsto\quad} & 1, \\ 
t^{-1}a^nt \stackrel{\psi^{II}_{0,\beta, c}}{\quad \mapsto\quad} &\big( a^{\beta} t^c\big)^{-1}\, 1\, \big( a^{\beta} t^c\big)= 1.
 \end{align*}

Now let us check that the composition $\psi^I_{\alpha_1, \beta_1, 1} \circ \psi^I_{\alpha_2, \beta_2, 1}$ equals $\psi^I_{\alpha_1\alpha_2, \beta_1\alpha_2+\beta_2, 1}$. In fact, for $\alpha_1={\ell}_1/n^{k_1}$, $\beta_1=q_1/n^{p_1}$ and $\alpha_2={\ell}_2/n^{k_2}$, $\beta_2=q_2/n^{p_2}$, with $k_i, \ell_i, p_i, q_i\in \ZZ$, $k_i, p_i\geq 0$, $i=1,2$, we have
 \begin{align*}
a \stackrel{\psi^I_{\alpha_1, \beta_1, 1}}{\qquad\mapsto\qquad} t^{-k_1} a^{\ell_1} t^{k_1} \stackrel{\psi^I_{\alpha_2, \beta_2, 1}}{\qquad\mapsto\qquad} & \big( a^{\beta_2} t\big)^{-k_1} \big( a^{\alpha_2}\big)^{\ell_1} \big( a^{\beta_2} t\big)^{k_1}= \\ &= a^{\frac{n^{-k_1}-1}{n-1}\beta_2} t^{-k_1} a^{\ell_1\alpha_2} a^{\frac{n^{k_1}-1}{n-1}\beta_2} t^{k_1} \\ &= a^{\frac{n^{-k_1}-1}{n-1}\beta_2} a^{n^{-k_1} \ell_1\alpha_2} a^{n^{-k_1}\frac{n^{k_1}-1}{n-1}\beta_2} \\ &= a^{\frac{n^{-k_1}-1}{n-1}\beta_2+\alpha_1\alpha_2+\frac{1-n^{-k_1}}{n-1}\beta_2} \\ &= a^{\alpha_1\alpha_2},
 \end{align*}
and
\begin{align*}
t \stackrel{\psi^I_{\alpha_1, \beta_1, 1}}{\qquad\mapsto\qquad} t^{-p_1} a^{q_1} t^{p_1+1} \stackrel{\psi^I_{\alpha_2, \beta_2, 1}}{\qquad\mapsto\qquad} & \big( a^{\beta_2} t\big)^{-p_1} \big( a^{\alpha_2} \big)^{q_1} \big( a^{\beta_2} t\big)^{p_1+1}= \\ &= a^{\frac{n^{-p_1}-1}{n-1}\beta_2} t^{-p_1} a^{q_1\alpha_2} a^{\frac{n^{p_1+1}-1}{n-1}\beta_2} t^{p_1+1} \\ &= a^{\frac{n^{-p_1}-1}{n-1}\beta_2} a^{n^{-p_1}q_1\alpha_2} a^{n^{-p_1}\frac{n^{p_1+1}-1}{n-1}\beta_2} t \\ &= a^{\frac{n^{-p_1}-1}{n-1}\beta_2+\beta_1\alpha_2+\frac{n-n^{-p_1}}{n-1}\beta_2} t 
\\ &= a^{\beta_1\alpha_2+\beta_2} t.
 \end{align*}
Therefore, $\psi^I_{\alpha_1,\beta_1, 1} \circ \psi^I_{\alpha_2,\beta_2, 1} =\psi^I_{\alpha_1\alpha_2, \beta_1\alpha_2+\beta_2, 1}$, as we wanted to see in (i).

The following part of the proof contains the composition rule when both the endomorphisms are of type-II:
 \begin{align*}
a \stackrel{\psi^{II}_{0, \beta_1, c_1}}{\qquad\mapsto\qquad} 1 \stackrel{\psi^{II}_{0, \beta_2, c_2}}{\qquad\mapsto\qquad} & 1 ,
 \end{align*}
and
\begin{align*}
t \stackrel{\psi^{II}_{0, \beta_1, c_1}}{\qquad\mapsto\qquad}  a^{\beta_1} t^{c_1} \stackrel{\psi^{II}_{0, \beta_2, c_2}}{\qquad\mapsto\qquad} & \big( a^{\beta_2}t^{c_2}\big)^{c_1} \, \, = \, \,  a^{\frac{n^{c_1c_2}-1}{n^{c_2}-1}\beta_2} t^{c_1 c_2} \, \, = \, \, a^{\mu\beta_2} t^{c_1 c_2}.
 \end{align*}
Hence, $\psi^{II}_{0,\beta_1, c_1} \circ \psi^{II}_{0,\beta_2, c_2} =\psi^{II}_{0, \mu\beta_2, c_1c_2}$, as announced in (ii).

To prove (iii):
 \begin{align*}
a \stackrel{\psi^{II}_{0, \beta_1, c_1}}{\qquad\mapsto\qquad} 1 \stackrel{\psi^{I}_{\alpha_2, \beta_2, 1}}{\qquad\mapsto\qquad} & 1 ,
 \end{align*}
and
\begin{align*}
t \stackrel{\psi^{II}_{0, \beta_1, c_1}}{\qquad\mapsto\qquad}  t^{-p_1}a^{q_1}t^{p_1}t^{c_1} \stackrel{\psi^{I}_{\alpha_2, \beta_2, 1}}{\qquad\mapsto\qquad} & = \big( a^{\beta_2} t\big)^{-p_1} \big( a^{\alpha_2} \big)^{q_1} \big( a^{\beta_2} t\big)^{p_1} \big( a^{\beta_2} t\big)^{c_1}\\ &=  a^{\frac{n^{-p_1}-1}{n-1}\beta_2} t^{-p_1} a^{q_1\alpha_2} a^{\frac{n^{p_1}-1}{n-1}\beta_2} t^{p_1} a^{\frac{n^{c_1}-1}{n-1}\beta_2} t^{c_1} \\&= a^{\frac{n^{-p_1}-1}{n-1}\beta_2} a^{q_1 \alpha_2 n^{-p_1}} a^{n^{-p_1}} a^{\frac{n^{p_1}-1}{n-1}\beta_2} t^{p_1} a^{\frac{n^{c_1}-1}{n-1}\beta_2} t^{c_1}\\
&= a^{\frac{n^{-p_1}-1}{n-1}\beta_2} a^{\beta_1\alpha_2} a^{\frac{1 - n^{-p_1}}{n-1}\beta_2} a^{\frac{n^{c_1}-1}{n-1}\beta_2} t^{c_1} \\ &= a^{\beta_1\alpha_2+\mu_1\beta_2} t^{c_1}.
 \end{align*}
This completes the proof of (iii).

Finally,
 \begin{align*}
a \stackrel{\psi^I_{\alpha_1, \beta_1, 1}}{\qquad\mapsto\qquad} t^{-k_1} a^{\ell_1} t^{k_1} \stackrel{\psi^{II}_{0, \beta_2, c_2}}{\qquad\mapsto\qquad} & \big( a^{\beta_2} t^{c_2}\big)^{-k_1} 1 \big( a^{\beta_2} t^{c_2}\big)^{k_1}= 1,
 \end{align*}
and
\begin{align*}
t \stackrel{\psi^I_{\alpha_1, \beta_1, 1}}{\qquad\mapsto\qquad} t^{-p_1} a^{q_1} t^{p_1}t \stackrel{\psi^{II}_{0, \beta_2, c_2}}{\qquad\mapsto\qquad} & \big( a^{\beta_2} t^{c_2}\big)^{-p_1} 1 \big( a^{\beta_2} t\big)^{p_1} \big( a^{\beta_2} t^{c_2}\big) \, \, = \, \, \big( a^{\beta_2} t^{c_2}\big).
 \end{align*}
This completes the proof of (iv).
\end{proof}

Using Lemma~\ref{lem: calculs}, below we compute the images of an arbitrary element $a^{\nu}t^d\in BS(1,n)$ (with $\nu=m/n^r\in \ZZ[1/n]$, $m,r,d\in \ZZ$, $r\geq 0$) by type-I and type-II endomorphisms respectively.

 \begin{align}\label{eq: action I}
\big( a^{\nu}t^d \big) \psi^I_{\alpha,\beta, 1} = \big( t^{-r}a^{m}t^{r+d}\big) \psi^I_{\alpha,\beta, 1} &= \big( a^{\beta}t\big)^{-r} a^{m\alpha} \big(a^{\beta}t\big)^{r+d} \notag \\ &= a^{\frac{n^{-r}-1}{n-1}\beta}t^{-r} a^{m\alpha} a^{\frac{n^{r+d}-1}{n-1}\beta}t^{r+d} \notag \\ &= a^{\frac{n^{-r}-1}{n-1}\beta} a^{m n^{-r}\alpha} a^{n^{-r}\frac{n^{r+d}-1}{n-1}\beta}t^{d} \\ &= a^{\frac{n^{-r}-1}{n-1}\beta+ m n^{-r}\alpha + \frac{n^{d}-n^{-r}}{n-1}\beta}t^{d} \notag \\ &= a^{\nu \alpha + \frac{n^{d}-1}{n-1}\beta}t^{d}. \notag
 \end{align}

  \begin{align}\label{eq: action II}
\big( a^{\nu}t^d \big) \psi^{II}_{0,\beta, c} = \big( t^{-r}a^{m}t^{r+d}\big) \psi^{II}_{0,\beta, c} &= \big( a^{\beta}t^c\big)^{-r} 1 \big( a^{\beta}t^c\big)^{r+d} \notag \\ &= \big( a^{\beta}t^c\big)^d \\ &= a^{\frac{n^{cd}-1}{n^c-1}\beta}t^{cd}. \notag
 \end{align}

\section{The endo-twisted conjugacy problem in $BS(1,n)$}\label{sec: TCP}

In this section, our goal is to prove that the endo-twisted conjugacy problem is solvable in $BS(1,n)$. First, we record the following elementary fact, which holds in an arbitrary group $G$. 

\begin{lem} \label{lem: twisted endos}
Let $G$ be a group, $u,v,w \in G$, $\varphi \in \Aut(G)$, $\psi \in \End(G)$ and $\gamma_w \in \Inn(G)$. Then, the following are equivalent:
 \begin{itemize}
\item[(a)] $u \sim_\psi v$,
\item[(b)] $u \varphi \sim_{\varphi^{-1}\psi\varphi} v \varphi$,
\item[(c)] $wu \sim_{\psi\gamma_{w^{-1}}} wv$, 
\item[(d)] $uw \sim_{\gamma_{w^{-1}}\psi} vw$. \hfill \qed
 \end{itemize}
\end{lem}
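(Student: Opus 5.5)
The plan is to unwind the definition directly and, in each case, write down an explicit witness for the new twisted conjugacy in terms of the old one. Throughout I use the convention from the abstract: $u\sim_\psi v$ means $v=(x\psi)^{-1}ux$ for some $x\in G$. Maps act on the right and $\varphi\psi$ means ``first $\varphi$, then $\psi$''. I also use $g\gamma_{w^{-1}}=wgw^{-1}$. Each equivalence will come from a bijection $x\leftrightarrow y$ of $G$ that carries witnesses of one relation to witnesses of the other, so both directions come out together.

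For (a)$\Leftrightarrow$(b), I start from $v=(x\psi)^{-1}ux$ and apply the automorphism $\varphi$. This gives $v\varphi=(x\psi\varphi)^{-1}(u\varphi)(x\varphi)$. I then set $y=x\varphi$, so that $x=y\varphi^{-1}$ and $x\psi\varphi=y(\varphi^{-1}\psi\varphi)$. The identity becomes
\[
v\varphi=\bigl(y\varphi^{-1}\psi\varphi\bigr)^{-1}(u\varphi)\,y,
\]
which is exactly (b). Since $\varphi$ is bijective, the substitution $x\mapsto x\varphi$ is invertible. The converse is the same argument applied to the automorphism $\varphi^{-1}$ and the endomorphism $\varphi^{-1}\psi\varphi$.

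For (a)$\Leftrightarrow$(c), I keep the same witness $x$. I rewrite
\[
wv=w(x\psi)^{-1}u x=\bigl(w(x\psi)w^{-1}\bigr)^{-1}(wu)\,x .
\]
Since $w(x\psi)w^{-1}=x\psi\gamma_{w^{-1}}$, this reads $wv=(x\psi\gamma_{w^{-1}})^{-1}(wu)x$. Every step is reversible: cancel $w$ on the left.

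For (a)$\Leftrightarrow$(d), I change the witness to $y=w^{-1}xw$. Then
\[
vw=(x\psi)^{-1}u\,x\,w=(x\psi)^{-1}(uw)\,y .
\]
Because $y\gamma_{w^{-1}}=wyw^{-1}=x$, we get $y\gamma_{w^{-1}}\psi=x\psi$, which is exactly the twisted conjugacy $uw\sim_{\gamma_{w^{-1}}\psi}vw$. Conjugation by $w$ is a bijection of $G$, so the converse follows by reversing the substitution.

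There is no real obstacle here. The only point needing care is bookkeeping: keeping the right-action and composition conventions straight, so that $\gamma_{w^{-1}}$ lands on the correct side of $\psi$ in (c) versus (d). In (b) one also needs $\varphi$ to be an automorphism, so that $\varphi^{-1}$ exists and the conjugated map $\varphi^{-1}\psi\varphi$ makes sense.
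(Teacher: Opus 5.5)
Your proof is correct and follows essentially the same route as the paper: you unwind the definition and transform the witness explicitly, using $x\mapsto x\varphi$ for (b), the same witness for (c), and conjugation of the witness by $w$ for (d). The paper runs (d) in the direction (d)$\Rightarrow$(a) via $\tilde g = wgw^{-1}$, which is just the inverse of your substitution $y=w^{-1}xw$; your framing as a bijection of witnesses makes both directions of each equivalence slightly more transparent.
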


\begin{proof}
    $u \sim_\psi v$ if and only if there exists $g \in G$ such that $v = (g\psi)^{-1} u g \Leftrightarrow v \varphi = \big( (g\psi)^{-1} u g\big) \varphi$. Let $g \varphi = g'$. Then, $(g\psi)^{-1} u g\big) \varphi = (g^{-1})\psi\varphi \, u\varphi \, g\varphi = (g^{-1}\varphi)\varphi^{-1}\psi\varphi \, u\varphi \, g\varphi = (g')^{-1}\varphi^{-1}\psi\varphi \, (u\varphi) \, g' = (g' \varphi^{-1}\psi\varphi)^{-1} \, (u\varphi) \, g'$. Thus, we get $v\varphi = (g' \varphi^{-1}\psi\varphi)^{-1} \, (u\varphi) \, g' \Leftrightarrow u\varphi \sim_{\varphi^{-1}\psi\varphi} v\varphi$, which completes the proof of (a) $\Leftrightarrow$ (b).

    Again, suppose (a) holds, and so there exists $g \in G$ such that $v = (g\psi)^{-1} u g$
\begin{align*}
v = (g\psi)^{-1} u g \, \Leftrightarrow \, wv &= w(g\psi)^{-1} \, w^{-1} w \, u \, g \\ & = (g^{-1}\psi) \gamma_{w^{-1}} \, wu \, g \\ & = (g \psi \gamma_{w^{-1}})^{-1} \, wu \, g.
 \end{align*}
Hence, $u \sim_\psi v$ if and only if $wu \sim_{\psi \gamma_{w^{-1}}} wv$ and completes the proof of $(a) \Leftrightarrow (c)$.

Now, suppose (d) holds, i.e., there exists $g \in G$ such that $vw = (g\gamma_{w^{-1}} \psi)^{-1} \, uw \, g$
\begin{align*}
vw & = (g\gamma_{w^{-1}} \psi)^{-1} \, uw\, g\\ & = (g\gamma_{w^{-1}})^{-1} \psi \, uw \, g \\ & = (w g w^{-1})^{-1} \psi \, uw \, g \\&= (w g^{-1} w^{-1})\psi \, uw \,g \\ \Leftrightarrow v & = (w g^{-1} w^{-1})\psi \, u \, (w g w^{-1}) \\ &= (\tilde{g}\psi)^{-1} \, u \, \tilde{g} \\ \Leftrightarrow u &\sim_{\psi} v,
 \end{align*}
where $wgw^{-1} = \tilde{g}$.Thus, we have (d) $\Leftrightarrow$ (a), and we get the announced equivalence result. 
\end{proof}

\begin{thm}\label{thm: main TCP}
The endo-twisted conjugacy problem is solvable in $BS(1,n)$.
\end{thm}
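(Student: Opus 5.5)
We reduce the problem to explicit arithmetic in $\ZZ[1/n]$ using the normal form $g=a^{\alpha}t^{c}$ and the descriptions of endomorphisms in Proposition~\ref{prop: O'Neil}. First we convert the input. Given words $u,v$ and the images of $a,t$ under $\psi$, we compute normal forms $u=a^{\mu}t^{r}$ and $v=a^{\nu}t^{s}$. We then decide whether $\psi$ is of type-I or type-II: it is type-II exactly when $a\psi=1$, which the word problem (via Lemma~\ref{lem: linearity}) can test. This yields the parameters $\alpha,\beta\in\ZZ[1/n]$ and $c\in\ZZ$. The unknown is $x=a^{\xi}t^{d}$ with $\xi\in\ZZ[1/n]$ and $d\in\ZZ$. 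The equation to solve is $(x\psi)\,v=u\,x$, with $x\psi$ given by \eqref{eq: action I} or \eqref{eq: action II}.

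\emph{Type-I case.} Here $x\psi=a^{\xi\alpha+\frac{n^d-1}{n-1}\beta}t^{d}$. Comparing $t$-exponents in $(x\psi)v=ux$ gives $d+s=r+d$, so $s=r$ is necessary. Comparing $a$-exponents gives
\[
\xi\alpha+\tfrac{n^d-1}{n-1}\beta+n^{d}\nu=\mu+n^{r}\xi ,
\]
that is, $\xi(\alpha-n^{r})=\mu-\tfrac{n^d-1}{n-1}\beta-n^{d}\nu$.

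If $\alpha\neq n^{r}$, we need some $d\in\ZZ$ for which the right-hand side divided by $\alpha-n^{r}$ lies in $\ZZ[1/n]$. Write $\alpha-n^{r}=\pm n^{e}m$ with $\gcd(m,n)=1$. Then the question is whether $(n-1)\mu-(n^{d}-1)\beta-(n-1)n^{d}\nu$, after clearing $n$-powers, vanishes modulo $m(n-1)$, or more precisely modulo the odd-to-$n$ part of $m$. The map $d\mapsto n^{d}$ modulo that integer is periodic, since $n$ is invertible modulo a number coprime to $n$. So it suffices to check finitely many residues of $d$, one full period.

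If $\alpha=n^{r}$, the left-hand side is $0$. We must then decide whether $(n^{d}-1)\beta+(n-1)n^{d}\nu=(n-1)\mu$ for some $d$. This is linear in $n^{d}$: either it is an identity, or it determines $n^{d}$ uniquely, and we check whether that value is an integer power of $n$.

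\emph{Type-II case.} Here $x\psi=(a^{\beta}t^{c})^{d}$, which does not depend on $\xi$. The $t$-exponent equation is $cd+s=r+d$, i.e.\ $d(c-1)=r-s$.
If $c\neq1$, this determines $d$ (or shows there is no solution). The $a$-equation then reads $\tfrac{n^{cd}-1}{n^c-1}\beta+n^{cd}\nu=\mu+n^{r}\xi$, which is solvable for $\xi\in\ZZ[1/n]$ for this fixed $d$, because $n^{r}$ is a unit in $\ZZ[1/n]$.
If $c=1$, we need $r=s$, and then $\xi$ is again solvable for any $d$, say $d=0$.
Hence the type-II case is always decided by a direct check.

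The main obstacle is the type-I case with $\alpha\neq n^{r}$. There we must search over all $d\in\ZZ$, including negative $d$ where $n^{d}\notin\ZZ$. The plan is to multiply through by a suitable power of $n$, so that divisibility in $\ZZ[1/n]$ becomes a congruence modulo the integer part of $\alpha-n^{r}$ coprime to $n$. Periodicity of $n^{d}$ in $(\ZZ/M\ZZ)^{\times}$ then bounds the search to $d$ in a finite set of size at most $\varphi(M)$. This case requires care with denominators, but it reduces the problem to finitely many computable checks, which proves Theorem~\ref{thm: main TCP}.
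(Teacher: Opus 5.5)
Your argument is correct, and it takes a more direct route than the paper.

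\textbf{What the paper does.} It first normalizes the inputs using Lemma~\ref{lem: twisted endos}: it multiplies on the right by a power of $t$ and applies the inner automorphism $\gamma_{t^{-r_1}}$, absorbing both into the twisting endomorphism. This makes $u=a^{m_1}$ and $v=t^{-r}a^{m_2}t^{r}$ (times $t^{d}$ in type~II). It then writes the conjugator as $a^{y/n^{x}}t^{p}$ and clears all denominators. The result is an integer exponential equation $An^{x}+By=Cn^{z}$ (resp.\ $An^x+By+Cn^z=0$) with two exponential unknowns. The paper decides it by case analysis:
\begin{itemize}
\item $B=\pm 1$, or $C=0$;
\item stripping the $n$-part of $C$ to force $z'\ge 0$;
\item prime factorizations when $B=0$;
\item otherwise, checking whether the finite sets $A\mathcal N$ and $C''\mathcal N$ of powers of $n$ modulo $B$ meet.
\end{itemize}

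\textbf{What you do differently.} You keep the $a$-coordinate $\xi$ of the conjugator in the ring $\ZZ[1/n]$ and solve for it, so only the $t$-exponent $d$ remains unknown. In type~I this leaves a divisibility question in $\ZZ[1/n]$ with a single exponential unknown. Passing to a modulus coprime to $n$ makes $n$ a unit, so $d\mapsto n^{d}$ is purely periodic on all of $\ZZ$, and negative $d$ need no separate device. In type~II you observe that $x\psi$ does not depend on $\xi$, so the $a$-equation is always solvable. The answer is therefore just $(c-1)\mid(r-s)$, or $r=s$ when $c=1$. The paper instead records $d=p-pc$ and then runs the exponential-equation analysis on the $a$-coordinate. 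Your observation shows that analysis is unnecessary there, since the $t$-exponent condition alone decides the question.

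\textbf{What each buys.} Your route gives explicit criteria and avoids tracking how the twisting endomorphism changes under the reductions of Lemma~\ref{lem: twisted endos}. The paper's route offers a uniform template for both types, at the cost of more case splitting and a two-orbit intersection argument.

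\textbf{Points to tighten.}
\begin{itemize}
\item A nonzero element of $\ZZ[1/n]$ cannot always be written as $\pm n^{e}m$ with $\gcd(m,n)=1$; for example $n=6$ and $\alpha-n^{r}=2$. What you need is $\alpha-n^{r}=\varepsilon m$, where $\varepsilon$ is a unit of $\ZZ[1/n]$ and $m$ is the largest divisor of the numerator coprime to $n$.
\item Take $M=m|n-1|$, which is coprime to $n$. Then $\ZZ[1/n]/M\ZZ[1/n]\cong\ZZ/M\ZZ$, and it suffices to check $d$ over one period of the multiplicative order of $n$ modulo $M$.
\item In the degenerate case $\alpha=n^{r}$, include the inconsistent subcase: if the coefficient of $n^{d}$ vanishes but the constant term does not, there is no solution.
\end{itemize}
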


\begin{proof}
    Let us suppose that we are given a fixed endomorphism $\psi \in \End (BS(1,n))$ and two elements $u = a^{\nu_1}t^{d_1}, v = a^{\nu_1}t^{d_1} \in BS(1, n)$, where $\nu_i =m_i/n^{r_i}\in \ZZ[1/n]$ with $m_i, r_i, d_i\in \ZZ$, $r_i\geq 0$, $i=1,2$. We have to decide whether $a^{\nu_1}t^{d_1}$ and $a^{\nu_2}t^{d_2}$ are $\psi$-twisted conjugated to each other, i.e., whether $u\sim_\psi v$. As there are two types of endomorphisms, we need to consider the following two cases, namely, Case-1: $\psi = \psi^{II}_{0, \beta, c}$, and Case-2: $\psi = \psi^{I}_{\alpha, \beta, 1}$.

    \noindent
    \textit{Proof of Case-1:} Let us take $w=t^{d_1}, \text{ and } w'=t^{-r_1}$. $\psi^{II}_{0, \beta, c}\in \End(BS(1,n))$ is given where $\beta = \ell/n^q$, where $\ell, q, c \in \ZZ$ and $q \geq 0$. We write $\psi^{II}_{0, \beta, c}$ simply as by $\psi^{II}$. Applying Lemma~\ref{lem: twisted endos} (a) $\Leftrightarrow$ (d), 
    %
    \begin{equation}\label{equ: change 1}
        u \sim_{\psi} v \Leftrightarrow uw^{-1} \sim_{\gamma_w\psi} vw^{-1} \Leftrightarrow a^{\nu_1} \sim_{\gamma_w\psi} a^{\nu_2} t^{d_2 - d_1} \Leftrightarrow t^{-r_1} a^{m_1} t^{r_1}  \sim_{\gamma_w\psi} t^{-r_2} a^{m_2} t^{r_2}t^{d_2 - d_1}
    \end{equation}
   Now take $\varphi = \gamma_{w'} \in \Aut(BS(1,n))$. Combining Lemma~\ref{lem: twisted endos} (a) $\Leftrightarrow$ (b) and from~\eqref{equ: change 1} we get the following:
   \begin{equation}
       t^{-r_1} a^{m_1} t^{r_1}  \sim_{\gamma_w\psi} t^{-r_2} a^{m_2} t^{r_2}t^{d_2 - d_1} \Leftrightarrow a^{m_1} \sim_{\Psi^{II}} V,
   \end{equation}
where $\Psi^{II} = (\gamma_{w'})^{-1}\gamma_w\psi (\gamma_{w'})$ and $V = (t^{-r_2} a^{m_2} t^{r_2}t^{d_2 - d_1})\varphi = t^{-(r_2 - r_1)}a^{m_2} t^{r_2 - r_1} t^{d_2 - d_1}$. Further, take $r = r_2 - r_1$ and $d = d_2 - d_1$. First, we observe from~\ref{lem: Composition} that any kind of composition of a type-I and a type-II endomorphism, the resulting endomorphism is of type-II.
Thus, adjusting the twisting endomorphism appropriately, we are reduced to checking the endo-twisted conjugation for inputs $a^{m_1}$ and $t^{-r} a^{m_2} t^r t^d$ with $d, m_1, m_2, r \in \ZZ$, $r \geq 0$. Taking the possible $\psi^{II}$-twisted conjugator as $g = a^{\gamma}t^p$, where $\gamma = y/n^x \in \ZZ[1/n]$, $p \in \ZZ$ and using~\eqref{eq: action II} and Lemma~\ref{lem: calculs}(iv) we have the following:
\begin{align*}
    (a^{\gamma}t^p \psi^{II})^{-1} \, a^{m_1} \, a^{\gamma}t^p 
    &= (a^{\frac{n^{pc} -1}{n^c -1}\beta}t^{pc})^{-1} \, a^{m_1} \, a^{\gamma}t^p \\
    &= a^{\frac{n^{-pc} -1}{n^c -1}\beta}t^{-pc} \, a^{m_1} \, a^{\gamma}t^p \\
    &= a^{\frac{n^{-pc} -1}{n^c -1}\beta} \, a^{m_1n^{-pc}} \, a^{\gamma n^{-pc}} \, t^{-pc+p}\\
    &= a^{(\frac{1 - n^{pc}}{n^c -1}\beta + m_1 + \gamma)n^{-pc}} \, t^{-pc+p}.
\end{align*}
Altogether we have $u \sim_{\psi^{II}} v$ if and only if 
%
\begin{equation}\label{main endo II-1}
  a^{m_2/n^r} t^d = a^{(\frac{1 - n^{pc}}{n^c -1}\beta + m_1 + \gamma)n^{-pc}} \, t^{-pc+p}.  
\end{equation}
Hence, equating both sides of~\eqref{main endo II-1}, firstly, we need $d = p-pc$ and also,
\begin{align*}
    \frac{m_2n^{pc}}{n^r} & = \frac{1 - n^{pc}}{n^c -1}\beta + m_1 + \gamma\\
    \Rightarrow m_2n^{pc-r}(n^c - 1) & = (1 - n^{pc})\beta + m_1(n^c - 1) + \frac{y}{n^x}(n^c - 1)\\
    \Rightarrow m_2(n^c - 1)n^{pc+x} & = (1 - n^{pc})\beta n^r n^x + m_1(n^c - 1) n^r n^x + (n^c - 1) n^r y.
\end{align*}
Let us apply the change of variable $z = pc + x\in \ZZ$, where $c$ is given; then the last equation of the above computation becomes
\begin{align*}
m_2(n^c - 1)n^{ z} & = (1 - n^{z})\beta n^r n^x + m_1(n^c - 1) n^r n^x + (n^c - 1) n^r y \\
\Rightarrow m_2(n^c - 1)n^{ z} & = \beta n^r n^x - \beta n^r n^{ z} + m_1(n^c - 1) n^r n^x + (n^c - 1) n^r y\\
\Rightarrow An^x + By + Cn^{ z} & =0.
\end{align*}
From the inputs, namely, $\ell, q, m_1, m_2, r, d, n \in \ZZ$, $q, r \geq 0$, $n \neq \pm 1$, we compute three integers
\[
A = \beta n^r + m_1n^r(n^c-1), \, \, B= n^r(n^c -1), \, \, C = m_2(1-n^c) - \beta n^r.
\]

Summarizing $A,B,C\in \ZZ$; and $u = a^{\nu_1} t^{d_1}$ and $a^{\nu_2} t^{d_2}$ are $\psi^{II}_{0, \beta, c}$-twisted conjugated to each other if and only if the equation 
 \begin{equation}\label{equ: main endo II-2}
An^x +By + Cn^z = 0
 \end{equation}
has an integral solution, for the unknowns $x,y,z\in \ZZ$, with $x\geq 0$. If $B=\pm 1$ this is always the case; so, assume $B\neq \pm 1$.

If $C=0$ it is easy to check: if further $B=0$ then equation~\eqref{equ: main endo II-2} has no solution unless $A=0$; and if $B\neq 0$ compute the finite set $\mathcal{N}=\{1,n,n^2, \ldots \}\subseteq \ZZ/B\ZZ$ (by computing the successive powers of $n$ until obtaining the first repetition modulo $B$) and check whether the set $A\mathcal{N}$ contains $0 \pmod{B}$.

Now assume $C\neq 0$, write $C=C' n^s$ with $s\geq 0$ and $C'\neq 0$ not being multiple of $n$, and note that all possible solutions to equation~\eqref{equ: main endo II-2} have $z \geq -s$. So, with the change of variable $\ZZ \ni z'=z+s$, \eqref{equ: main endo II-2} is equivalent to 
  \begin{equation}\label{equ: main endo II-3}
 An^x +By + C'n^{z'} =0,
  \end{equation}
 for the unknowns $x,y,z'\in \ZZ$, with $x, z'\geq 0$. Let us distinguish two more cases. 

\textbf{Sub-case 1: $B=0$}. Our equation~\eqref{equ: main endo II-3} becomes $An^x=C'' n^{ z'}$ (where, $C'' = - C'$), which has the desired solution if and only if $A/C''$ is a rational number being a (positive or negative) power of $n$. This can be checked by looking at the prime factorizations of $A$, $C'$, and $n$, and so decidable.

\textbf{Sub-case 2: $B\neq 0$}. Now, equation~\eqref{equ: main endo II-3} is equivalent to 
  \begin{equation}\label{equ: main endo II-4}
 An^x \equiv C''n^{ z'} \pmod{B},
 \end{equation}
 for the unknowns $x,z'\in \ZZ$, with $x,z'\geq 0$. Compute the finite set $\mathcal{N}=\{1,n,n^2, \ldots \}\subseteq \ZZ/B\ZZ$. Clearly, equation~\eqref{equ: main endo II-4} admits a solution if and only if the subsets $A\mathcal{N}$ and $C''\mathcal{N}$ from $\ZZ/B\ZZ$ intersect non-trivially, which is  decidable. This completes the proof of Case-1.

\noindent
\textit{Proof of Case-2:} Let $\psi^{I}_{\alpha, \beta, 1} \in \End(BS(1,n))$, where $\alpha = k/n^j$, $\beta= \ell/ n^q$ with $k, j, \ell, q \in \ZZ$, $j, q \geq 0$, be the given endomorphism. We observe that any type-I endomorphism preserve the $|.|_t$. Hence, $d_1=d_2$ (call it just $d$) is an obvious necessary condition for $u \sim_{\psi^{I}_{\alpha, \beta, 1}} v$. 

Applying Lemma~\ref{lem: twisted endos} (a)$\Leftrightarrow$(d) with $w=t^{-d}$ and (a)$\Leftrightarrow$(b) with $\varphi=\gamma_{t^{-r_1}}$, we have that 
 $$
a^{\nu_1}t^d \sim_{\psi^I_{\alpha, \beta, 1}} a^{\nu_2}t^d \quad \Leftrightarrow \quad a^{\nu_1} \sim_{\gamma_{w^{-1}}\psi^I_{\alpha, \beta,1}} a^{\nu_2} \quad \Leftrightarrow \quad a^{m_1} \sim_{\varphi^{-1}\gamma_{w^{-1}}\psi^I_{\alpha, \beta, 1}\varphi} t^{-(r_2-r_1)}a^{m_2}t^{r_2-r_1}.
 $$
Thus, adjusting the twisting endomorphism appropriately, we are reduced to check twisted conjugation for inputs of the form $u=a^{m_1}$, $v=t^{-r}a^{m_2}t^{r}$, with $m_1, m_2, r (=r_2 - r_1)\in \ZZ$, $r\geq 0$.

Similarly, as in Case-1, writing the possible $\psi^I_{\alpha, \beta, 1}$-twisted conjugator as $g=a^{\gamma}t^p$, $\gamma\in \ZZ[1/n]$, $p\in \ZZ$, and using equation~\eqref{eq: action I}, we have 
 \[
g\psi^I_{\alpha,\beta, 1} =a^{\gamma \alpha + \frac{n^{p}-1}{n-1}\beta}t^{p}; 
\] 
now, using Lemma~\ref{lem: calculs}(iv),  
 \[
\big( g\psi^I_{\alpha,\beta, 1} \big)^{-1} ug=t^{-p} a^{-\gamma \alpha - \frac{n^{p}-1}{n-1}\beta} a^{m_1} a^{\gamma}t^p= a^{n^{-p}\big(-\gamma \alpha -\frac{n^p-1}{n-1}\beta +m_1 +\gamma \big)}.
 \]
Hence, $u=a^{m_1}\sim_{\psi^I_{\alpha, \beta, 1}} t^{-r}a^{m_2}t^r =v$ if and only if 
 \[
\frac{m_2}{n^{r}} n^p =-\gamma \alpha -\frac{n^{p}-1}{n-1}\beta +m_1 +\gamma, 
 \]
for some $\gamma \in \ZZ[1/n]$ and $p\in \ZZ$; rearranging and simplifying, we get
 \[
(n-1)m_2 n^p +(n-1)n^{r}\gamma (\alpha -1) +(n^{p}-1)n^{r}\beta =(n-1)n^{r}m_1.
 \]
Write $\gamma =y/n^x$, $x,y\in \ZZ$, $x\geq 0$, and apply the change of variable $\ZZ \ni z=p+x$; our equation becomes equivalent to the integral equation  
 %
\begin{align*}
 (n-1)m_2 n^p +(n-1)n^{r}\frac{y}{n^x} \frac{k-n^j}{n^j} +(n^{p}-1)n^{r}\frac{\ell}{n^q} & =(n-1)n^{r}m_1,\\ 
 (n-1)m_2 n^{p+x+j+q} +(n-1)y(k-n^j)n^{r+q} +(n^{p}-1)\ell n^{r+j+x} &=(n-1)m_1 n^{r+j+q+x},\\
\Big( \ell n^{r+j}+(n-1)m_1 n^{r+j+q}\Big) n^x +\Big( (n-1)(n^j-k)n^{r+q} \Big) y &=\Big( (n-1)m_2 n^{j+q} +\ell n^{r+j}\Big) n^{z}. 
\end{align*}
From the data, namely $k, j, \ell, q, m_1, m_2, r, n\in \ZZ$, $j,q,r\geq 0$, $n\neq \pm 1$, let us compute the three integers 
 \[
A=\ell n^{r+j}+(n-1)m_1 n^{r+j+q},\qquad B=(n-1)(n^j-k)n^{r+q}, \qquad C=(n-1)m_2 n^{j+q}+\ell n^{r+j}.
 \]
Thus, we get  $A,B,C\in \ZZ$; and $u=a^{m_1}$ and $v=t^{-r}a^{m_2}t^r$ are $\psi^I_{\alpha, \beta, 1}$-twisted conjugated to each other if and only if the equation 
 \begin{equation}\label{eq: equation-I 1}
An^x +By=Cn^z
 \end{equation}
has an integral solution, for the unknowns $x,y,z\in \ZZ$, with $x\geq 0$. If $B=\pm 1$ this is always the case; so, assume $B\neq \pm 1$. Then, we execute the same procedure as of the proof of the Case-1 with these new values of $A, B$ and $C$, in particular we study all the sub-cases, e.g., $C=0$ with $B = 0$ or $B \neq 0$, and $C \neq 0$ with $B = 0$ or $B \neq 0$. And, we see that all of these sub-cases are decidable. This completes the proof of Case-2. 
\end{proof}

\section{Outer fixed points and endo-twisted conjugacy}\label{sec: outer fixed points}

In this section we focus on the outer fixed points of certain kind of endomorphisms $\psi$'s of solvable Baumslag--Solitar groups $BS(1, n)$, and the $\varphi$-twisted conjugacy of two elements $g_1$ and $g_2$ of $BS(1, n)$, where $g_1, g_2$ and $\varphi \in \End(BS(1,n))$ are determined by that particular endomorphism $\psi$. 
We would also like to highlight that in this section we are only considering the type-I endomorphisms. Later, we introduce the notion of weakly (outer) fixed points; and exhibit the interaction between the weakly (outer) fixed points and endo-twisted conjugacy problem in $BS(1, n)$.

\begin{dfn}
    Let $G$ be a group and $\varphi \in \End(G)$. An \emph{outer fixed element} of $\varphi$ is an element $g \in G$ such that $g \varphi \sim g$. 

    An \emph{outer fixed point} of an endomorphism $\varphi \in \End(G)$ is the conjugacy class $[g]$ of an outer fixed element $g$ of $\varphi$. 
\end{dfn}

It is easy to observe that if $g$ is an outer fixed element, then $[g]$ satisfies $([g])\varphi \subseteq [g]$.

For a word $z$, let us define the endomorphism $\varphi_z : BS(1, n) \rightarrow BS(1, n)$ as
$a\varphi_z = a, t\varphi_z = z$. The following Proposition~\ref {thm: TCP FP} connects the existence of fixed points (in the conjugacy class of $a$) of the input map $\psi$ to the $\varphi_z$-twisted conjugacy problem for words $u$ and $a^\alpha$, where $u$ is given with $|u|_t = 0$ but the integer $\alpha \in \ZZ[1/n]$ is unknown. Note that, here we put a condition on the input $u$ that $|u|_t = 0$, which is necessary as $|a^\alpha|_t = 0$ and in Section~\ref{sec: TCP} we already have seen that if two words $u$ and $v$ are twisted conjugated by a type-I endomorphism then $|u|_t = |v|_t$.

In this section, we write any type-I endomorphism simply as $\psi^{I}$ without explicitly mentioning its subscripts; and $\varphi_z$ is always a type-I endomorphism of $BS(1,n)$.

\begin{thm}\label {thm: TCP FP}
    Let $u = a ^\nu$ and $v = a^\beta t$ be two given elements of $BS(1,n)$, where $\nu = \frac{m}{n^r} \in \ZZ[1/n], \, \beta = \frac{\ell}{n^q}$, $m, r, \ell, q \in \ZZ$ and $0 \leq r \leq q$. Also, let $\psi^{I} \in \End(BS(1,n)) \colon a \mapsto uau^{-1}, t \mapsto v$. Define $z = u^{-1} v u$.

    There exist $w \in BS(1,n)$ and $\alpha \in \ZZ[1/n]$ such that 
    \begin{equation}\label{eq: TCP OP}
    u^{-1} = (w\varphi_z)^{-1} a^{\alpha} w
    \end{equation}
 if and only if $[a] \cap \Fix \psi^{I}$ is non-empty.  
\end{thm}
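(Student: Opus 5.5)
The plan is to rewrite $\psi^{I}$ as a composition of $\varphi_z$ with an inner automorphism. Then the fixed-point condition on a conjugate of $a$ becomes a statement about the centralizer of $a$, and that statement is exactly equation~\eqref{eq: TCP OP}.

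\emph{Step 1: factor $\psi^{I}$.} With the paper's right-action conventions, $x\gamma_{u^{-1}}=uxu^{-1}$. I claim $\psi^{I}=\varphi_z\gamma_{u^{-1}}$, and it suffices to check this on generators:
\[
a\mapsto u\,a\,u^{-1}, \qquad t\mapsto u z u^{-1}=u(u^{-1}vu)u^{-1}=v .
\]
Here the hypothesis $0\le r\le q$ is only used to guarantee that $\psi^{I}$ and $\varphi_z$ are genuine endomorphisms (type-I, by Proposition~\ref{prop: O'Neil}); it plays no further role. Consequently, for every $x\in BS(1,n)$ we have $(x^{-1}ax)\psi^{I}=u\,(x\varphi_z)^{-1}a\,(x\varphi_z)\,u^{-1}$.

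\emph{Step 2: translate the fixed-point condition.} An element of $[a]\cap\Fix\psi^{I}$ has the form $x^{-1}ax$ with $u(x\varphi_z)^{-1}a(x\varphi_z)u^{-1}=x^{-1}ax$. Setting $h=(x\varphi_z)\,u^{-1}x^{-1}$ and rearranging, this is equivalent to $hah^{-1}=a$, i.e.\ $h\in C(a)$. This manipulation is reversible, so $x^{-1}ax\in\Fix\psi^{I}$ if and only if $(x\varphi_z)u^{-1}x^{-1}\in C(a)$.

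\emph{Step 3: compute $C(a)$.} This is the only genuinely group-specific input, and I expect it to be the main point to get right. Using the normal form $g=a^{\gamma}t^c$ and the multiplication rule $(a^{\alpha_1}t^{c_1})(a^{\alpha_2}t^{c_2})=a^{\alpha_1+n^{c_1}\alpha_2}t^{c_1+c_2}$, we get $ga=a^{\gamma+n^c}t^c$ and $ag=a^{1+\gamma}t^c$. So $g$ commutes with $a$ if and only if $n^c=1$. Since $n\neq\pm1$, this forces $c=0$, hence $C(a)=\{a^{\alpha}\mid \alpha\in\ZZ[1/n]\}=\ker\pi$.

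\emph{Step 4: conclude.} By Steps 2 and 3, $x^{-1}ax\in\Fix\psi^{I}$ if and only if $(x\varphi_z)u^{-1}x^{-1}=a^{\alpha}$ for some $\alpha\in\ZZ[1/n]$. This is equivalent to $u^{-1}=(x\varphi_z)^{-1}a^{\alpha}x$, which is precisely~\eqref{eq: TCP OP} with $w=x$. Both directions follow, since every element of $[a]$ is of the form $x^{-1}ax$ and every step above is an equivalence.
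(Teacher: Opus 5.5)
Your proof is correct and is essentially the paper's argument. Both rest on the identity $(x^{-1}ax)\psi^{I}=u\,(x\varphi_z)^{-1}a\,(x\varphi_z)\,u^{-1}$ (you get it cleanly from the factorization $\psi^{I}=\varphi_z\gamma_{u^{-1}}$, while the paper substitutes into words) and then on the centralizer of $a$ being $\{a^{\alpha}\mid \alpha\in\ZZ[1/n]\}$, with your chain of equivalences handling both directions at once; your explicit check that $a^{\gamma}t^{c}$ commutes with $a$ only when $n^{c}=1$, i.e.\ $c=0$, supplies the detail the paper merely alludes to via $BS(1,n)\simeq\ZZ[1/n]\rtimes\ZZ$.
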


\begin{proof}
First we assume that there exists $w \in BS(1,n)$ and $\alpha \in \ZZ[1/n]$ such that~\eqref{eq: TCP OP} holds. As $w$ is an element of $BS(1,n)$, it is a word on $a \text{ and }t$,
$$
\begin{array}{rl}
   (w^{-1} a w)\psi^{I} \, = & w^{-1}(a\psi^{I}, t\psi^{I}) \, a\psi^{I} \,  w(a\psi^{I}, t\psi^{I}) \\
    = & w^{-1}(uau^{-1}, uzu^{-1}) \,  uau^{-1} \,  w(uau^{-1}, uzu^{-1})\\
    = & u \, w^{-1}(a,z) \, u^{-1} \, u a u^{-1} \, u\, w(a, z)\, u^{-1}\\
    = & u (w\varphi_z)^{-1} \, a \, (w\varphi_z) u^{-1}\\
    = & w^{-1} a^{-\alpha} a a^{\alpha} w\\
    = & w^{-1} a w.
\end{array}
$$
Note that the second last equality follows from~\eqref{eq: TCP OP}. Thus, we establish $[a] \cap \Fix \psi^{I} \neq \emptyset$ in the presence of \eqref{eq: TCP OP}.

Let $x \in [a] \cap \Fix \psi^{I}$, also let $w \in BS(1,n)$ such that $x = w^{-1} a w$. Then we have the following
%
\[
w^{-1} a w \, = \, (w^{-1} a w) \psi^{I} \, = \, u(w\varphi_z)^{-1} \, a \, (w\varphi_z)u^{-1},
\]
where the first equality holds as $w^{-1} a w \in \Fix \psi^{I}$, and the second equality comes from the previous calculation. This in turn implies that 
\[
(w\varphi_z)u^{-1}w^{-1} a\,  w\, u(w\varphi_z)^{-1} \, = \, a.
\]
Thus, we get $(w\varphi_z)u^{-1}w^{-1}$ centralises the generator $a$. As $BS(1,n) \simeq \ZZ[1/n] \rtimes \ZZ$, there exist some $\alpha \in \ZZ[1/n]$ such that $(w\varphi_z)u^{-1}w^{-1} = a^\alpha$, and so $u^{-1} = (w\varphi_z)^{-1} a^{\alpha} w$. Hence, the announced result follows.
\end{proof}

\begin{cor}
    Let $\alpha \in \ZZ[1/n]$ be given and $\psi^{I} \in \End(BS(1, n))$ be defined as in~\ref {thm: TCP FP}. Then it is algorithmically decidable whether $[a] \cap \Fix\psi^I$ is non-empty.
\end{cor}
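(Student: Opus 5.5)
The plan is to reduce the question, via Theorem~\ref{thm: TCP FP}, to whether equation~\eqref{eq: TCP OP} is solvable, and then solve that equation by hand in the normal form $a^{\gamma}t^p$. I expect this to show that the answer is always affirmative, so the decision procedure is trivial. The reason is that $u=a^{\nu}$ and $a$ both lie in the abelian subgroup $\ncl{a}\simeq \ZZ[1/n]$, so they commute.

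First, I will take $w=1$ and $\alpha=-\nu\in \ZZ[1/n]$. Since $\varphi_z$ is a homomorphism, $1\varphi_z=1$. Equation~\eqref{eq: TCP OP} then reads $u^{-1}=a^{-\nu}$, which holds. By Theorem~\ref{thm: TCP FP}, this gives $[a]\cap \Fix\psi^{I}\neq\emptyset$ for every input of the stated form.

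As a sanity check independent of the theorem, Lemma~\ref{lem: calculs}(ii) gives $a\psi^{I}=uau^{-1}=a^{\nu}a\,a^{-\nu}=a^{1}=a$. So $a$ itself is in $[a]\cap\Fix\psi^I$.

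For completeness, I would also record the full solution set of~\eqref{eq: TCP OP}. This explains what the theorem's $\alpha$ really ranges over. The steps are:
\begin{itemize}
\item[(1)] Compute $z=u^{-1}vu=a^{-\nu}a^{\beta}t\,a^{\nu}=a^{\beta+(n-1)\nu}t$, using Lemma~\ref{lem: calculs}(i). Hence $\varphi_z=\psi^I_{1,\beta',1}$ with $\beta'=\beta+(n-1)\nu$.
\item[(2)] Write $w=a^{\gamma}t^p$ and apply~\eqref{eq: action I} and Lemma~\ref{lem: calculs}(iv). This gives $(w\varphi_z)^{-1}a^{\alpha}w=a^{n^{-p}\left(\alpha-\frac{n^p-1}{n-1}\beta'\right)}$.
\item[(3)] Conclude that~\eqref{eq: TCP OP} holds if and only if $\alpha=-\nu n^p+\frac{n^p-1}{n-1}\beta'$. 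Here $p\in\ZZ$ and $\gamma\in\ZZ[1/n]$ are arbitrary, and this $\alpha$ always lies in $\ZZ[1/n]$.
\end{itemize}

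The only point needing care is the reading of the corollary, since $\alpha$ is mentioned as "given". If $\alpha$ is regarded as fixed in advance, the question becomes whether $\alpha=-\nu n^p+\frac{n^p-1}{n-1}\beta'$ for some $p\in\ZZ$. Clearing denominators, this amounts to asking whether $(n-1)\alpha - \beta' = n^p\big(\beta'-(n-1)\nu\big)$ for some $p$. This is decidable: either both sides vanish, or the ratio of the two nonzero elements of $\ZZ[1/n]$ must be an integral power of $n$. That can be checked via prime factorizations, exactly as in Sub-case~1 of the proof of Theorem~\ref{thm: main TCP}. In either reading the problem is decidable, and for the intended statement about $[a]\cap\Fix\psi^I$ the answer is always "non-empty".
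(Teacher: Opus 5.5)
Your proof is correct, but it takes a different and much shorter route than the paper.

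\textbf{The paper's route.} The paper feeds $u^{-1}$, $a^{\alpha}$ and $\varphi_z$ into the general algorithm of Theorem~\ref{thm: main TCP} to decide whether $u^{-1}\sim_{\varphi_z}a^{\alpha}$. It then uses the equivalence of Theorem~\ref{thm: TCP FP} to translate that answer into a statement about $[a]\cap\Fix\psi^I$.

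\textbf{Your route.} You observe that $u=a^{\nu}$ lies in the abelian subgroup $\ncl{a}\simeq\ZZ[1/n]$. Hence $a\psi^I=uau^{-1}=a$, so $a\in[a]\cap\Fix\psi^I$ always, and the decision procedure is the constant answer ``non-empty''. Equivalently, $w=1$, $\alpha=-\nu$ solves~\eqref{eq: TCP OP}.

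\textbf{What your route buys.} First, it needs neither Theorem~\ref{thm: main TCP} nor the case analysis behind it. Second, it shows that in this setting both sides of the equivalence in Theorem~\ref{thm: TCP FP} always hold. Third, it avoids a quantifier issue in the paper's reduction. The paper decides twisted conjugacy only for the one prescribed $\alpha$, whereas the criterion of Theorem~\ref{thm: TCP FP} is existential in $\alpha$. So a negative answer for that particular $\alpha$ would not by itself certify that $[a]\cap\Fix\psi^I$ is empty.

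Your explicit solution set, $\alpha=-\nu n^p+\frac{n^p-1}{n-1}\beta'$, also settles the fixed-$\alpha$ reading without appealing to Theorem~\ref{thm: main TCP}. There is one small sign slip when you clear denominators. The correct identity is $(n-1)\alpha+\beta'=n^p\big(\beta'-(n-1)\nu\big)=n^p\beta$, with a plus sign on the left. Equivalently, $(n-1)(\alpha+\nu)=(n^p-1)\beta$. Your decidability argument via powers of $n$ is unaffected by this correction.
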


\begin{proof}
    We note that from the construction of $\psi^I$, $u \in BS(1,n)$ and $\varphi_z \in \End(BS(1,n))$ are given; also from the hypothesis $\alpha \in \ZZ[1/n]$ is given. Applying Theorem~\ref{thm: main TCP}, it is algorithmically decidable whether $u^{-1} \sim_{\varphi_z} a^{\alpha}$ for given $u, \alpha, \varphi_z$. Now $u^{-1} \sim_{\varphi_z} a^{\alpha}$ if and only if there exists $w \in BS(1, n)$ such that $u^{-1} = (w\varphi_z)^{-1} a^{\alpha} w$. Then, $w^{-1} a w \in [a] \cap \Fix \psi^{I}$. As we already computed $(w^{-1} a w) \psi^{I} \, = \, u(w\varphi_z)^{-1} \, a \, (w\varphi_z)u^{-1}$, and from Proposition~\ref {thm: TCP FP}, $(w^{-1} a w) \psi^{I} = w^{-1} a w$, i.e., $w^{-1} a w \in [a] \cap \Fix \psi^{I}$, if and only if $u^{-1} \sim_{\varphi_z} a^{\alpha}$ for some $\alpha \in \ZZ[1/n]$. This completes the proof. 
\end{proof}

We note that as $a$ is mapped to $a$ by $\varphi_z$ and $a$ is mapped to $uau^{-1}$ by the newly constructed homomorphism $\psi^{I}$. So, we are only focusing on type-I endomorphisms in this section as we mentioned before.

Now, we introduce the notion of weakly (outer) fixed points to study how the existence of weakly (outer) fixed points guarantees the endo-twisted conjugacy.

\begin{dfn}
   Let $G$ be a group and $\varphi \in \End(G)$. A \emph{weakly fixed point} of $\varphi$ is an element $g \in G$ such that $g \varphi \sim a$, where $a$ is a fixed point of $\varphi$. 
\end{dfn}


\begin{con}\label{con: main con}
Let $\widetilde{\psi}^I$ be an arbitrary type-I endomorphism of $BS(1,n)$ such that $a\widetilde{\psi}^I = uau^{-1}$ and $t\widetilde{\psi}^I = v$. Let $x = u^{-1}vu$ and $\varphi_x \in \End(BS(1,n)) \colon a \mapsto a, t \mapsto x$. Note that the constructions of $\widetilde{\psi}^I$ and $\varphi_x$ (a type-I endomorphism) are same as of $\psi^I$ and $\varphi_z$ respectively (see Proposition~\ref {thm: TCP FP}), except that $|u|_t$ is not necessarily equal to $0$. Also, note that if we consider $v = a^{\beta} t$ for some $\beta \in \ZZ[1/n]$, then $x = a^{\beta'} t$ for some $\beta' \in \ZZ[1/n]$; and so, $t\widetilde{\psi}^I = v = uxu^{-1} = u a^{\beta'} t u^{-1}$. Set,  $\mu = \frac{n^d - 1}{n-1}$.

The image of an arbitrary element $a^{\gamma}t^d$ by $\widetilde{\psi}^I$ is the following:
$$
\begin{array}{rl}
    (a^{\gamma}t^d)\widetilde{\psi}^I\, = & u\, a^{\gamma}\, u^{-1} (uxu^{-1})^d \\
    = & u\, a^{\gamma}\, (a^{\beta'}t)^d \, u^{-1}\\
    = & u\, a^{\gamma}\, a^{\mu\beta'}t^d \, u^{-1}\\
    = & u\, a^{\gamma + \mu\beta'}t^d \, u^{-1}
\end{array}
$$
\end{con}

\begin{obs}\label{obs: WFP}
    Let us consider $\widetilde{\psi}^{I} \in \End(BS(1,n))$ defined as above. Then, all the weakly fixed points of $\widetilde{\psi}^{I}$ are of the form $a^\gamma t^d$ where $a^{\gamma + \mu \beta'} t^d \sim a^{\alpha} t^d$ and $a^{\alpha} t^d$ is a fixed point $\widetilde{\psi}^{I}$.
\end{obs}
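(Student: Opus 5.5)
The plan is to unwind the definition of a weakly fixed point and feed in the image formula from Construction~\ref{con: main con}. Take an element of $BS(1,n)$ in its normal form $g=a^{\gamma}t^d$ with $\gamma\in\ZZ[1/n]$ and $d\in\ZZ$. By definition, $g$ is a weakly fixed point of $\widetilde{\psi}^I$ exactly when $g\widetilde{\psi}^I\sim h$ for some fixed point $h$ of $\widetilde{\psi}^I$. Write $h=a^{\alpha}t^{e}$ in normal form.

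First I will show that $e=d$. Every type-I endomorphism preserves $|\cdot|_t$; this was noted in Case-2 of the proof of Theorem~\ref{thm: main TCP}, and it can also be read off from~\eqref{eq: action I}. Ordinary conjugation preserves $|\cdot|_t$ as well, since $\pi$ maps to the abelian group $\ZZ$. So $|g\widetilde{\psi}^I|_t=d$ and $|h|_t=e$, and the relation $g\widetilde{\psi}^I\sim h$ forces $e=d$. Hence the fixed point has the form $a^{\alpha}t^d$.

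Next I will use the computation in Construction~\ref{con: main con}, which gives $g\widetilde{\psi}^I=u\,a^{\gamma+\mu\beta'}t^d\,u^{-1}$ with $\mu=\frac{n^d-1}{n-1}$. This element is conjugate to $a^{\gamma+\mu\beta'}t^d$, so transitivity of conjugacy gives
\[
g\widetilde{\psi}^I\sim a^{\alpha}t^d \iff a^{\gamma+\mu\beta'}t^d\sim a^{\alpha}t^d .
\]
That is exactly the stated characterization. The converse direction uses the same equivalence read backwards: if $a^{\gamma+\mu\beta'}t^d\sim a^{\alpha}t^d$ with $a^{\alpha}t^d$ fixed, then $g$ is weakly fixed.

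The only delicate point is the Construction's formula itself. It relies on writing $v=a^{\beta}t$, so that $x=u^{-1}vu=a^{\beta'}t$, and on applying Lemma~\ref{lem: calculs}(vi) with $c=1$ to get $(a^{\beta'}t)^d=a^{\mu\beta'}t^d$. For negative $d$ I would recheck that (vi) still holds as stated. It does, because the same formula with $r<0$ is consistent with (v). Beyond this check, the argument is a direct rewriting and I do not expect a genuine obstacle.
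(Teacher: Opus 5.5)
Your proposal is correct. It follows the same reasoning the paper relies on; the paper gives no separate proof and reads the Observation off directly from the formula $(a^{\gamma}t^d)\widetilde{\psi}^I=u\,a^{\gamma+\mu\beta'}t^d\,u^{-1}$ in Construction~\ref{con: main con}. Your $|\cdot|_t$-argument makes explicit why the fixed point must have the same $t$-exponent $d$, a step the paper leaves unstated.
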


\begin{dfn}
    Let $G$ be a group and $\varphi \in \End(G)$. Fix $a \in \Fix \varphi$. An element $x \in G$ is a \emph{weakly outer fixed point related to $a$} of $\varphi$ if there exists $g \in G$ such that $(g^{-1} x g)\varphi = g^{-1} a g$.
    In general, $x$ is a \emph{weakly outer fixed point} of $\varphi$ if there exists $g \in G$ such that $(g^{-1} x g)\varphi = g^{-1} a g$ for some fixed point $a$ of $\varphi$.
\end{dfn}

\begin{obs}\label{obs: WFOP}
    If $x$ is a weakly outer fixed point of an endomorphism $\varphi$ of $G$, then $x$ is a weakly fixed point of $\varphi$.
\end{obs}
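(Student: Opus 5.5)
The statement is Observation~\ref{obs: WFOP}: every weakly outer fixed point of $\varphi$ is a weakly fixed point of $\varphi$. The plan is to unwind both definitions and produce an explicit conjugator.

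Suppose $x$ is a weakly outer fixed point of $\varphi \in \End(G)$. Then there exist a fixed point $a \in \Fix\varphi$ and an element $g \in G$ such that
\[
(g^{-1} x g)\varphi = g^{-1} a g .
\]
We need to show that $x\varphi \sim a'$ for some fixed point $a'$ of $\varphi$. We will take $a' = a$ itself.

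Since $\varphi$ is a homomorphism, the left-hand side equals $(g\varphi)^{-1}\,(x\varphi)\,(g\varphi)$. Equating the two sides and conjugating back gives
\[
x\varphi = (g\varphi)\, g^{-1} a g\, (g\varphi)^{-1} = h^{-1} a h, \qquad h := g\,(g\varphi)^{-1} .
\]
Hence $x\varphi$ is conjugate to $a$, and $a \in \Fix\varphi$, so $x$ is a weakly fixed point of $\varphi$.

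There is no real obstacle here; this is a one-line manipulation. The only step needing care is to keep the paper's right-action convention consistent, so that $\gamma_g\colon y \mapsto g^{-1}yg$. This convention is why the conjugator comes out as $h = g\,(g\varphi)^{-1}$ rather than its inverse. Note also that $g\varphi$ and $g$ generally differ, since $\varphi$ need not fix $g$; this is why we get only conjugacy, not equality, of $x\varphi$ and $a$.
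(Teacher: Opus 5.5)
Your argument is correct: expanding $(g^{-1}xg)\varphi=(g\varphi)^{-1}(x\varphi)(g\varphi)$ and solving for $x\varphi$ gives $x\varphi=h^{-1}ah$ with $h=g(g\varphi)^{-1}$, so $x\varphi\sim a$ with $a\in\Fix\varphi$. The paper states this observation without proof and only invokes it in the final corollary, so your direct unwinding of the two definitions is the intended verification.
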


The following Proposition connects the weakly fixed points of $\widetilde{\psi}^{I}$ and the twisted conjugacy problem in $BS(1,n)$.

\begin{prop}\label{prop: TCP WFP}
    Let $\widetilde{\psi}^I \in \End(BS(1,n))$ be defined as in~\ref{con: main con} and $a^{\alpha}t^d \in \Fix \widetilde{\psi}^I$. For arbitrary elements $g_1, g_2 \in BS(1,n)$ if
    \[
        (g_1^{-1}g_2 g_1)\psi^{I} = g_1^{-1} a^{\alpha}t^d g_1
    \]
    holds then $u^{-1} = (g_1\varphi_x)^{-1} g' g_1$, where $g' \in BS(1,n)$ is determined by $\psi^{I}$.
\end{prop}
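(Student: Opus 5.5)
The plan is to copy the computation in the proof of Proposition~\ref{thm: TCP FP}, now with $|u|_t$ arbitrary and with the arbitrary element $g_2$ in place of $a$. I read $\psi^{I}$ in the statement as the map $\widetilde{\psi}^I$ of Construction~\ref{con: main con}. The key identity I need is this: for every $w\in BS(1,n)$,
\[
w\widetilde{\psi}^I = u\,(w\varphi_x)\,u^{-1}.
\]
I will check it on generators. On $a$ we have $a\widetilde{\psi}^I = uau^{-1} = u(a\varphi_x)u^{-1}$. On $t$ we have $t\widetilde{\psi}^I = v = uxu^{-1} = u(t\varphi_x)u^{-1}$. Both sides of the identity are homomorphisms in $w$, namely $\widetilde{\psi}^I$ and $\varphi_x\gamma_{u^{-1}}$. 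Since they agree on $a$ and $t$, they agree everywhere. This is exactly the word-substitution step in the proof of Proposition~\ref{thm: TCP FP}, and it does not use $|u|_t=0$.

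Next I apply the identity to $g_1^{-1}g_2g_1$. This gives
\[
(g_1^{-1}g_2g_1)\widetilde{\psi}^I = u\,(g_1\varphi_x)^{-1}\,(g_2\varphi_x)\,(g_1\varphi_x)\,u^{-1}.
\]
By hypothesis the left-hand side equals $g_1^{-1}a^{\alpha}t^d g_1$. Rearranging the resulting equality gives
\[
(g_1\varphi_x)\,u^{-1}g_1^{-1}\;a^{\alpha}t^d\;g_1u\,(g_1\varphi_x)^{-1} = g_2\varphi_x .
\]
Put $h=(g_1\varphi_x)u^{-1}g_1^{-1}$. The last display says $h\,(a^{\alpha}t^d)\,h^{-1}=g_2\varphi_x$. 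Solving for $u^{-1}$ from the definition of $h$ gives
\[
u^{-1}=(g_1\varphi_x)^{-1}\,h\,g_1 .
\]
So I take $g'=h$, which has the required shape $u^{-1}=(g_1\varphi_x)^{-1}g'g_1$.

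The main obstacle is the phrase ``$g'$ is determined by $\psi^I$''. In Proposition~\ref{thm: TCP FP}, $g_2=a$ and the element $h$ centralises $a$, so it is forced to be a power $a^{\alpha}$. Here I only get that $h$ conjugates the fixed point $a^{\alpha}t^d$ to $g_2\varphi_x$. I would therefore describe $g'$ as an element of this coset: a fixed particular conjugator times the centraliser of $a^{\alpha}t^d$. I would compute that centraliser in normal form using the multiplication rule $(a^{\alpha_1}t^{c_1})(a^{\alpha_2}t^{c_2})=a^{\alpha_1+n^{c_1}\alpha_2}t^{c_1+c_2}$.

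In the degenerate case $d=0$ the fixed point is $a^{\alpha}$ and the centraliser is $\ker\pi=\{a^{\gamma}\}$, so $g'$ is a power of $a$ up to the fixed conjugator. For $d\neq0$ the centraliser is cyclic, generated by a root of $a^{\alpha}t^d$. In both cases $g'$ depends only on the data $u$, $a^{\alpha}t^d$ and $x$ coming from $\widetilde{\psi}^I$, together with $g_2\varphi_x$. That dependence is the sense in which I would state the conclusion.
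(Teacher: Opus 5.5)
Your proof is correct. Its core is the same computation as the paper's: the identity $(g_1^{-1}g_2g_1)\widetilde{\psi}^I=u\,(g_1\varphi_x)^{-1}(g_2\varphi_x)(g_1\varphi_x)\,u^{-1}$. Your remark that $\widetilde{\psi}^I=\varphi_x\gamma_{u^{-1}}$, checked on generators, is a cleaner way to package the paper's word-substitution step.

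The two arguments part ways at the last step. The paper writes $g_2=a^{\gamma}t^d$ and takes, from Observation~\ref{obs: WFP}, \emph{some} right conjugator $g'=a^{\delta}t^r$ with $a^{\alpha}t^d=(g')^{-1}a^{\gamma+\mu\beta'}t^d\,g'$. It then asserts that $k=(g_1\varphi_x)u^{-1}g_1^{-1}(g')^{-1}$ equals $1$. Equating the two expressions only shows that $k$ commutes with $X=a^{\gamma+\mu\beta'}t^d$. Replacing $g'$ by $Xg'$, which is still a right conjugator, replaces $k$ by $kX^{-1}$. So the paper's conclusion cannot hold for every choice of conjugator.

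You instead set $g'=h=(g_1\varphi_x)u^{-1}g_1^{-1}$ and verify that this particular element satisfies $h\,(a^{\alpha}t^d)\,h^{-1}=g_2\varphi_x$. That is exactly the choice for which the paper's final step is valid, so your version fills the paper's gap rather than reproducing it. The price is that the displayed identity $u^{-1}=(g_1\varphi_x)^{-1}g'g_1$ becomes tautological; the real content is the conjugation relation for $h$, which you state.

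Two small corrections to your last paragraph:
\begin{itemize}
\item For $d=0$ and $\alpha=0$ the centraliser is all of $BS(1,n)$, not $\ker\pi$.
\item $h$ depends on $g_1$ and $u$ as well. What depends only on $g_2\varphi_x$ and $a^{\alpha}t^d$ is the coset of all conjugators, and within that coset the identity singles out exactly $h$.
\end{itemize}
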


\begin{proof}
    Let $g_2 = a^{\gamma} t^d$, where $\gamma \in \ZZ[1/n]$, $d \in \ZZ$. We assume that 
    \begin{equation}\label{eq: WFP 1}
         (g_1^{-1} a^{\gamma} t^d g_1)\psi^{I} = g_1^{-1} a^{\alpha}t^d g_1
    \end{equation}
    holds, and so $a^{\gamma} t^d$ is a weakly fixed point of $\psi^{I}$. From Observation~\ref{obs: WFP} $a^{\gamma + \mu \beta'} t^d \sim a^{\alpha} t^d$, and let $g' = a^\delta t^r$ be the right conjugator, i.e.,
\begin{equation}\label{eq: WFP 2}
    a^\alpha t^d = (a^\delta t^r)^{-1} a^{\gamma + \mu \beta'} t^d \, (a^\delta t^r).
\end{equation}

From the construction of $\widetilde{\psi}^I$,
$$
\begin{array}{rl}
   (g_1^{-1} a^{\gamma}t^d g_1)\widetilde{\psi}^I \, = & g_1^{-1}(a\widetilde{\psi}^I, t\widetilde{\psi}^I) \, (a^{\gamma}t^d)\widetilde{\psi}^I \,  g_1(a\widetilde{\psi}^I, t\widetilde{\psi}^I) \\
    = & g_1^{-1}(uau^{-1}, uxu^{-1}) \,  u \, a^{\gamma + \mu \beta'} t^d \, u^{-1} \,  g_1(uau^{-1}, uxu^{-1})\\
    = & u \, g_1^{-1}(a,x) \, u^{-1} \, u\, a^{\gamma + \mu \beta'} t^d \, u^{-1} \, u\, g_1(a, x)\, u^{-1}\\
    = & u (g_1\varphi_x)^{-1} \, a^{\gamma + \mu \beta'} t^d \, (g_1\varphi_x) u^{-1}
\end{array}
$$
Thus using~\eqref{eq: WFP 1} and~\eqref{eq: WFP 2}  in one hand, we have
\begin{equation}\label{eq: WFP 3}
    (g_1^{-1} a^{\gamma}t^d g_1)\widetilde{\psi}^I \, = g_1^{-1} (a^\delta t^r)^{-1} a^{\gamma + \mu \beta'} t^d \, (a^\delta t^r) g_1.
\end{equation}
And, on the other hand,
\begin{equation}\label{eq: WFP 4}
    (g_1^{-1} a^{\gamma}t^d g_1)\widetilde{\psi}^I \, = u (g_1\varphi_x)^{-1} \, a^{\gamma + \mu \beta'} t^d \, (g_1\varphi_x) u^{-1}.
\end{equation}
Altogether from \eqref{eq: WFP 3} and \eqref{eq: WFP 4} we deduce the following:
%
\[
(g_1\varphi_x)\, u^{-1}\, g_1^{-1}\, (a^\delta t^r)^{-1} \, a^{\gamma + \mu \beta'} t^d = a^{\gamma + \mu \beta'} t^d \, (g_1\varphi_x)\, u^{-1}\, g_1^{-1}\, (a^\delta t^r)^{-1}
\]
Hence, $(g_1\varphi_x)\, u^{-1}\, g_1^{-1}\, (a^\delta t^r)^{-1} =1$, which in turn implies that $u^{-1} = (g_1\varphi_x)^{-1} (a^\delta t^r) g_1$. This completes the proof.
\end{proof}

The following Corollary connects the weakly outer fixed points of $\widetilde{\psi}^{I}$ and the endo-twisted conjugacy in $BS(1,n)$.

\begin{cor}
    Let $\widetilde{\psi}^I \in \End(BS(1,n))$ be defined as in~\ref{con: main con} and $a^\alpha t^d \in \Fix\widetilde{\psi}^I$. Let $w$ be the weakly outer fixed point relative to $a^\alpha t^d$ of $\widetilde{\psi}^I$. Then $u^{-1} = (g\varphi_x)^{-1} g' g$, where $g, g' \in BS(1,n)$  determined by $\widetilde{\psi}^I$ and $a^\alpha t^d$.
\end{cor}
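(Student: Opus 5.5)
The plan is to reduce the corollary directly to Proposition~\ref{prop: TCP WFP}, with the definition of a weakly outer fixed point supplying the hypothesis of that proposition. By definition, since $w$ is a weakly outer fixed point of $\widetilde{\psi}^I$ relative to $a^\alpha t^d$, there is some $g \in BS(1,n)$ with
\[
(g^{-1} w g)\widetilde{\psi}^I = g^{-1} a^{\alpha} t^d g .
\]
I will set $g_1 = g$ and $g_2 = w$. This is exactly the displayed hypothesis of Proposition~\ref{prop: TCP WFP}, reading the $\psi^I$ there as $\widetilde{\psi}^I$ from Construction~\ref{con: main con}.

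Before invoking the proposition, I will check that $w$ has the shape the proof of Proposition~\ref{prop: TCP WFP} uses, namely $w = a^{\gamma} t^{d}$ with the same $t$-exponent $d$ as the fixed point. Since $\widetilde{\psi}^I$ is of type I, it preserves $|\cdot|_t$, as observed in Case-2 of the proof of Theorem~\ref{thm: main TCP}. Applying $|\cdot|_t$ to both sides of the displayed equation gives $|w|_t = d$. Hence $w = a^{\gamma} t^{d}$ for some $\gamma \in \ZZ[1/n]$, by the normal form. By Observation~\ref{obs: WFOP}, $w$ is a weakly fixed point, so Observation~\ref{obs: WFP} gives $a^{\gamma + \mu\beta'} t^d \sim a^{\alpha} t^d$. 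We then take a conjugator $g' = a^{\delta} t^{r}$ realizing this conjugacy.

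Applying Proposition~\ref{prop: TCP WFP} yields $u^{-1} = (g\varphi_x)^{-1} g' g$. Here $g$ is determined by $w$, $\widetilde{\psi}^I$ and $a^\alpha t^d$ through the definition, and $g'$ is determined by $\widetilde{\psi}^I$ (through $\beta'$ and $\mu$) and by $a^\alpha t^d$. This is the claimed conclusion.

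The main obstacle is the final step of Proposition~\ref{prop: TCP WFP} as written. There, commutation of an element with $a^{\gamma+\mu\beta'}t^d$ is used to conclude that the element is trivial. In $BS(1,n)$ this fails in general: centralizers of $a^{\gamma+\mu\beta'}t^d$ are nontrivial (cyclic-type when $d \neq 0$, all of $\ker\pi$ when $d = 0$). To be safe, I would first try to repair the argument by allowing $g'$ to absorb the centralizing element. That is, define $g'$ as $(g\varphi_x)u^{-1}g^{-1}$ times the chosen conjugator, which is again a conjugator between the same two elements. With this definition the identity $u^{-1} = (g\varphi_x)^{-1} g' g$ holds tautologically. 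The statement only asserts that $g'$ is determined by the data, so this choice is legitimate and sidesteps the triviality claim.
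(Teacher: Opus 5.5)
Your route is the paper's. The paper's proof of this corollary reruns the computation of Proposition~\ref{prop: TCP WFP} with $g_1=g$, $g_2=w$. It tacitly takes $w=a^{\gamma}t^{d}$, which you correctly justify via preservation of $|\cdot|_t$ by type-I maps. It ends with exactly the step you flag.

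You are right that this step is wrong. Put $X=a^{\gamma+\mu\beta'}t^d$ and $c=(g\varphi_x)u^{-1}g^{-1}(g')^{-1}$. The computation only gives $cX=Xc$, and $c=1$ does not follow. For example, $Xg'$ is also a conjugator from $X$ to $a^{\alpha}t^d$. The identity $u^{-1}=(g\varphi_x)^{-1}g'g$ cannot hold for both $g'$ and $Xg'$ unless $X=1$, i.e.\ $w=1$. So the statement is only true for a suitable choice of conjugator, and your repair is the right fix.

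There is one slip in how you phrase the repair. The new conjugator must be $c\,g'=(g\varphi_x)u^{-1}g^{-1}$, i.e.\ the \emph{centralizing element} times the chosen conjugator. Read literally, ``$(g\varphi_x)u^{-1}g^{-1}$ times the chosen conjugator'' is $(g\varphi_x)u^{-1}g^{-1}g'$, and for that element the identity is not tautological.

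Once you make this choice, the detour through the proposition and an auxiliary conjugator is unnecessary. For all $y$ we have $y\widetilde{\psi}^I=u\,(y\varphi_x)\,u^{-1}$, and $w\varphi_x=a^{\gamma+\mu\beta'}t^d$. So the defining equation $(g^{-1}wg)\widetilde{\psi}^I=g^{-1}a^{\alpha}t^dg$ rewrites directly as $a^{\alpha}t^d=h^{-1}(w\varphi_x)h$ with $h=(g\varphi_x)u^{-1}g^{-1}$, and then $u^{-1}=(g\varphi_x)^{-1}hg$.

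State this conjugacy property of $g'=h$ explicitly as part of the conclusion. Without it the displayed identity is vacuous, since every $g$ satisfies it with $g':=(g\varphi_x)u^{-1}g^{-1}$.
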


\begin{proof}
    Since $x$ is the weakly outer fixed point relative to $a^\alpha t^d$ of $\widetilde{\psi}^I$, there exists $g \in BS(1,n)$ such that $(g^{-1} x g)\widetilde{\psi}^I = g^{-1} a^\alpha t^d g$. Let $w = a^{\gamma}t^d \in BS(1,n)$, where $\gamma \in \ZZ[1/n], \, d \in \ZZ$. We have already showed that $(a^{\gamma}t^d)\widetilde{\psi}^I = u a^{\gamma + \mu \beta'} t^d u^{-1}$. Therefore,
    $$
\begin{array}{rl}
   (g^{-1} a^{\gamma}t^d g)\widetilde{\psi}^I \, = & g^{-1}(a\widetilde{\psi}^I, t\widetilde{\psi}^I) \, (a^{\gamma}t^d)\widetilde{\psi}^I \,  g(a\widetilde{\psi}^I, t\widetilde{\psi}^I) \\
    = & g^{-1}(uau^{-1}, uxu^{-1}) \,  u \, a^{\gamma + \mu \beta'} t^d \, u^{-1} \,  g(uau^{-1}, uxu^{-1})\\
    = & u \, g^{-1}(a,x) \, u^{-1} \, u\, a^{\gamma + \mu \beta'} t^d \, u^{-1} \, u\, g(a, x)\, u^{-1}\\
    = & u (g\varphi_x)^{-1} \, a^{\gamma + \mu \beta'} t^d \, (g\varphi_x) u^{-1}
\end{array}
$$
From the hypothesis and Observation~\ref{obs: WFOP} $a^{\gamma}t^d$ is a weakly fixed point of $\widetilde{\psi}^I$ and so by Observation~\ref{obs: WFP}, $a^{\gamma+\mu\beta'}t^d \sim a^{\alpha}t^d$ with right conjugator $g'$. In other words,
\begin{equation*}
    a^{\alpha}t^d = (g')^{-1} a^{\gamma+\mu\beta'}t^d g'.
\end{equation*}
Replacing $a^{\alpha}t^d$ by above expression, in one hand we have, 
\begin{equation}\label{eq: WFOP 1}
    (g^{-1} a^{\gamma}t^d g)\widetilde{\psi}^I = g^{-1} a^\alpha t^d g = g^{-1}(g')^{-1} a^{\gamma+\mu\beta'}t^d g'g.
\end{equation}
On the other hand,
\begin{equation}\label{eq: WFOP 2}
     (g^{-1} a^{\gamma}t^d g)\widetilde{\psi}^I = u (g\varphi_x)^{-1} \, a^{\gamma + \mu \beta'} t^d \, (g\varphi_x) u^{-1}.
\end{equation}
Combining \eqref{eq: WFOP 1} and \eqref{eq: WFOP 2} we have the following:
$$
\begin{array}{ccl}
    g^{-1}(g')^{-1} a^{\gamma+\mu\beta'}t^d g'g & = &   u (g\varphi_x)^{-1} \, a^{\gamma + \mu \beta'} t^d \, (g\varphi_x) u^{-1}\\
    \Rightarrow (g\varphi_x) u^{-1}g^{-1}(g')^{-1} & = & 1
\end{array}
$$
Hence, $u^{-1} =  (g\varphi_x)^{-1}g'g$ and we get the announced result.
\end{proof}



\bigskip




\end{document}